\documentclass[10pt,oneside,reqno]{amsart}
\usepackage{setspace,caption}

%\doublespacing
\usepackage{amsaddr}
\usepackage{amsmath}
\usepackage{amsthm}
\usepackage{amssymb}
\usepackage{mathrsfs}
\usepackage{verbatim}

\setlength{\topmargin}{0.0cm}
\setlength{\textheight}{20cm}
\setlength{\evensidemargin}{0.3cm}
\setlength{\oddsidemargin}{0.3cm}
\setlength{\textwidth}{15.5cm}

\usepackage{xcolor}
% Definition of assumptions
{%
	\setcounter{enumi}{0}

	\begin{enumerate}}%
	{\end{enumerate} }

% Definition of assumptions
{%
	\setcounter{enumi}{0}

	\begin{enumerate}}%
	{\end{enumerate} }

\usepackage[colorlinks=true,linkcolor=blue,citecolor=red]{hyperref}

\usepackage[nameinlink]{cleveref}

\newtheorem{theorem}{Theorem}[section]
\newtheorem{lemma}[theorem]{Lemma}
\newtheorem{proposition}[theorem]{Proposition}
\newtheorem{corollary}[theorem]{Corollary}

\theoremstyle{definition}
\newtheorem{definition}[theorem]{Definition}
\newtheorem{remark}[theorem]{Remark}

\numberwithin{equation}{section}

\allowdisplaybreaks

\newcommand*\R{\mathbb{R}}
\newcommand*\N{\mathbb{N}}
\newcommand*\rn{\mathbb{R}^n}

\newcommand*\omegainfty{\Omega_{\infty}}

\newcommand*\re{\mathbb{R}}
\newcommand*\sph{\mathbb{S}}

\makeatletter
%\showlabels{bibitem}
\makeatother

\begin{document}

	\title[fractional $p$-Poincar\'e]{On the best constant in fractional $p$-Poincar\'e inequalities on cylindrical domains}
	
	\subjclass[2020]{26D10; 35R09; 46E35; 49J40.}
	%    Remove any unused author tags.
	
	\author{Kaushik Mohanta$^1$ and Firoj Sk$^2$}
	\address{Indian Institute of Technology Kanpur, India}
	%\curraddr{}
	\email{$^1$kmohanta@iitk.ac.in (Corresponding author)}

	%\address{Indian Institute of Technology Kanpur, India}
	%\curraddr{}
	\email{$^2$firoj@iitk.ac.in}
	%\thanks{Second author was supported by ERCIM ..................}

	%\author{}
	%\address{}
	%\curraddr{}
	%\email{}
	%\thanks{}

	\begin{abstract}
		We investigate the best constants for the regional fractional $p$-Poincar\'e inequality and the fractional $p$-Poincar\'e inequality in cylindrical domains. For the special case $p=2$, the result was already known due to Chowdhury-Csat\'{o}-Roy-Sk [\textit{Study of fractional {P}oincar\'{e} inequalities on unbounded
				domains, Discrete Contin. Dyn. Syst., 41(6), 2021}]. We addressed the asymptotic behaviour of the first eigenvalue of the nonlocal Dirichlet p-Laplacian eigenvalue problem when the domain is becoming unbounded in several directions. 
	\end{abstract}
	
	\maketitle

	%\keywords{\textit{Keywords:} Fractional Sobolev spaces, (regional)\;fractional Poincar\'e inequality, fractional $p$-Laplacian, nonlocal eigenvalue problems, unbounded domains, infinite strips like domains, cylindrical domains.} 
	%\smallskip

	%\dedicatory{}

	\maketitle
	
	\section{introduction}
	
	In the theory of partial differential equations, Poincar\'e inequality has always played an important role. In recent years, the study of various nonlocal analogues of Poincar\'e inequality has seen a steep surge. For the particular case $p=2$, Chowdhury-Csat\'o-Roy-Sk \cite{CCRS} have found the best constants for fractional Poincar\'e inequalities in certain unbounded domains. The aim of this article is to generalise \cite{CCRS} for any $p\in(1,\infty)$.

	For any open set $ \Omega\subseteq\rn$, $0<s<1$, $1\leq p<\infty$, we define the fractional Sobolev space
	$$	
	W^{s,p}(\Omega):=\left\{u\in L^p(\Omega):[u]_{s,p,\Omega}<\infty\right\},
	$$
	where
	\begin{equation}\label{seminorm}
	[u]_{s,p,\Omega}:=\left(\frac{C_{n,s,p}}{2}
	\int_{\Omega}\int_{\Omega}\frac{|u(x)-u(y)|^p}{|x-y|^{n+sp}}dxdy\right)^\frac{1}{p},
	\end{equation}
	\noindent is the so called Gagliardo seminorm. The constant $C_{n,s,p}$ \cite{AnWa} is given by
	\begin{equation}\label{const flap}
	C_{n,s,p}=\frac{ sp\;2^{2s-1}\Gamma\left(\frac{ n+sp}{2}\right)}{2\pi^{\frac{ n-1}{2}}\Gamma(1-s)\Gamma\left(\frac{ p+1}{2}\right)}.
	\end{equation}
	We endow this space with the so-called fractional Sobolev norm, given by 
	$$
	||u||_{s,p,\Omega}:=\left(\|u\|_{L^p(\Omega)}^p+[u]_{s,p,\Omega}^p
	\right)^\frac{1}{p}.
	$$
	At this point, we would like to introduce two more Banach spaces, directly related to the fractional Sobolev spaces $W^{s,p}(\Omega)$ defined above, which will be useful in framing the problem dealt in this article. The spaces $W^{s,p}_{\Omega}(\rn)$
	and
	$W^{s,p}_0(\Omega)$ denote the closures of $C_c^\infty(\Omega)$ with respect to the norms  $\left(\|u\|_{L^p(\Omega)}^p+[u]_{s,p,\rn}^p\right)^\frac{1}{p}$ and $||\cdot||_{s,p,\Omega}$ respectively. The Gagliardo seminorm and the fractional Sobolev norm are also important tools for studying the fractional $p$-Laplacian operator, defined by
	$$
	(-\Delta_{n,p})^su(x):=C_{n,s,p}\lim_{\substack{\epsilon\to 0}}\int_{\rn\setminus B_{\epsilon}(x)}\frac{|u(x)-u(y)|^{p-2}(u(x)-u(y))}{|x-y|^{n+sp}}dy,\;\;\;x\in\rn.
	$$
	Note that while defining the Gagliardo seminorm and the fractional $p$-Laplacian operator, in the existing literature, the constant $C_{n,s,p}$ is often ignored. However, we shall take this constant into account as this will be convenient while studying the best constants in fractional Poinca\'re inequalities (see \cref{rev-rem} bellow). We refer the reader to \cite{ada,BrSa,hhg,FiSeVa,FrPa,Lind, DeBoRi} for basic results regarding the fractional Sobolev spaces and the fractional $p$-Laplacian operator. 
	
	\noindent The \textit{regional fractional Poincar\'e constant} $P^1_{n,s,p}(\Omega)$, and \textit{fractional Poincar\'e constant} $P^2_{n,s,p}(\Omega)$  are defined as follows:
	$$
	P^1_{n,s,p}(\Omega):=\inf_{\substack{u\in W_0^{s,p}(\Omega) 
			\\ u\neq 0}}
	\frac{[u]_{s,p,\Omega}^p}{\displaystyle\int_{\Omega}|u|^p}, 
	\;\text{ and }\; 
	P^2_{n,s,p}(\Omega):=\inf_{\substack{u\in W^{s,p}_{\Omega}(\rn) 
			\\ u\neq 0}}
	\frac{[u]_{s,p,\R^n}^p}{\displaystyle\int_{\Omega}|u|^p}.
	$$
	\begin{definition}
		Let $\Omega$ be an open set in $\rn$. We say that
		\begin{itemize}
			\item the \textit{regional fractional Poincar\'e inequality} \textbf{(RFPI)} holds if $P^1_{n,s,p}(\Omega)>0$.
			\smallskip
			\item the \textit{fractional Poincar\'e inequality} \textbf{(FPI)}  holds if $P^2_{n,s,p}(\Omega)>0$. 
		\end{itemize}
	\end{definition}
	For $\ell>0$, set $\Omega_{\ell}:=\ell\omega_1\times\omega$, where $\omega_1$ and $\omega$ are bounded open subsets of $\R^m$ and $\R^{n-m}$ respectively, and consider the nonlocal Dirichlet $p$-Laplacian eigenvalue problem on $\Omega_\ell$:
	
	\begin{equation}\label{nonlocal EV}
	\begin{cases}
	(-\Delta_{n,p})^su_\ell=P_{n,s,p}^2(\Omega_{\ell})|u_\ell|^{p-2}u_\ell \text{ in }\Omega_{\ell},
	\\
	u_\ell=0 \text{ in }\R^{n}\setminus\Omega_{\ell},
	\end{cases}
	\end{equation} 
	and the corresponding cross section eigenvalue problem of \cref{nonlocal EV}
	\begin{equation}\label{cross section EV}
	\begin{cases}
	(-\Delta_{n-m,p})^su=P_{n-m,s,p}^2(\omega)|u|^{p-2} u \text{ in }\omega,
	\\
	u=0 \text{ in }\R^{n-m}\setminus \omega.
	\end{cases}
	\end{equation}

	In the existing literature, the \textbf{RFPI} and \textbf{FPI}, in unbounded domains, are not much explored yet. However, it is well-known that the \textbf{FPI} holds true that is $P^2_{n,s,p}(\Omega)>0$, when $\Omega$ is a bounded domain \cite{brasco2}. In \cite{BrCi}, the authors have shown that the \textbf{FPI} holds true for any domain which is bounded in one direction, although the question of best fractional Poincar\'e constant remained unattended (see also \cite{Yer}). In the special case $p=2$, it is known (see \cite{CCRS}) that the best  fractional Poincar\'e constants $P_{n,s,2}^1(\mathbb{R}^{n-1}\times (-1,1))$ and $P^2_{n,s,2}(\mathbb{R}^{m}\times\omega)$ are equal to that of the cross sections, that is to $P^1_{1,s,2}((-1,1))$ and $P^2_{n-m,s,2}(\omega)$ respectively, where $\omega$ is a bounded domain in $\R^{n-m}$. The first work in this direction, to the best of our knowledge, was done in \cite{CCRS,AmFrMu}. Later, some of the results of \cite{CCRS} were generalized in the Orlicz fractional Sobolev setup in \cite{BMRS}. 
	Regarding the \textbf{RFPI}, it is known that when $\Omega$ is a bounded domain with Lipschitz boundary, \textbf{RFPI} does not hold, that is $P^1_{n,s,p}(\Omega)=0$ if $0<s\leq\frac{1}{p}$ \cite{AnWa, War}. The \textbf{RFPI}, however, holds true, that is $P^1_{n,s,p}(\Omega)>0$ if $\frac{1}{p}<s<1$, when $\Omega$ is any bounded domain in $\rn$. We refer the reader to \cite{AnWa,War} for other related results regarding regional fractional $p$-Laplacian operator.  Regarding the asymptotic behaviour of the first eigenvalue $P^2_{n,s,p}(\Omega_{\ell})$ of \cref{nonlocal EV}, when $\ell\to\infty$, in the linear case, that is for $p=2$, Chowdhury-Roy \cite{ChRo} proved that the first eigenvalue $P^2_{n,s,2}(\Omega_{\ell})$ of \cref{nonlocal EV} converges to the first eigenvalue $P^2_{n-m,s,2}(\omega)$ of \cref{cross section EV}, when $\ell\to\infty$. Different kind of problems were studied regarding the asymptotic behaviour of $P^2_{n-m,s,2}(\Omega_\ell)$ as $\ell\to\infty$; we refer the readers to \cite{ChRoSh,EsRoSk,ChRo,Yer} and the references therein for more relevant information in this direction.

	Our first result depicts, for a cylindrical domain, that the best constant for \textbf{FPI} of the domain and that of the cross-section are the same. Indeed, we have the following result:
	
	\begin{theorem}\label{best constants}
		Let $0<s<1$, $1<p<\infty$ and $\omegainfty=\re^m\times\omega$ in $\rn$ with $1\leq m<n$, where $\omega$ is a bounded open subset of $\re^{n-m}$. Then we have 
		\begin{equation}\label{Eq2}
		P^2_{n,s,p}(\omegainfty)=P^2_{n-m,s,p}(\omega).    
		\end{equation}
		Furthermore, the best fractional Poincar\'e constant $P^2_{n,s,p}(\omegainfty)$, is never achvied.
	\end{theorem}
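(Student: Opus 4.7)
The plan is to prove the two inequalities and the non-attainment in three stages, with the main work occurring in Step~2.

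\textbf{Step 1 (Upper bound $P^2_{n,s,p}(\omegainfty)\le P^2_{n-m,s,p}(\omega)$).} I would take an almost-minimizer $v\in W^{s,p}_\omega(\re^{n-m})$ for the cross-section constant and any non-trivial cutoff $\phi\in C_c^\infty(\re^m)$, and test with the tensored function $u_L(x',x''):=\phi(x'/L)v(x'')$ on $\omegainfty$. Splitting $u_L(x)-u_L(y)=\phi(x'/L)[v(x'')-v(y'')]+v(y'')[\phi(x'/L)-\phi(y'/L)]$, applying the elementary inequality $|a+b|^p\le(1+\epsilon)|a|^p+C_\epsilon|b|^p$, and evaluating both pieces via Fubini and the radial identity
\[
\int_{\re^m}\frac{dw}{(|w|^2+r^2)^{(n+sp)/2}}=A_{n,m,s,p}\,r^{-(n-m)-sp},\qquad r>0,
\]
together with the arithmetic identity $C_{n,s,p}\,A_{n,m,s,p}=C_{n-m,s,p}$ (checked directly from \eqref{const flap} using $\Gamma$-function arithmetic), the leading piece equals $L^m\|\phi\|^p_{L^p(\re^m)}\,[v]^p_{s,p,\re^{n-m}}$ while the cross term is of order $L^{m-sp}[\phi]^p_{s,p,\re^m}\|v\|^p_{L^p(\omega)}$. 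Dividing by $\|u_L\|^p_{L^p(\omegainfty)}=L^m\|\phi\|^p_{L^p}\|v\|^p_{L^p(\omega)}$, sending $L\to\infty$, and then optimising over $v$, yields the bound.

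\textbf{Step 2 (Lower bound $P^2_{n,s,p}(\omegainfty)\ge P^2_{n-m,s,p}(\omega)$).} For $u\in C_c^\infty(\omegainfty)$ introduce the scalar cross-sectional $L^p$-profile
\[
U(x''):=\|u(\cdot,x'')\|_{L^p(\re^m)},\qquad x''\in\re^{n-m}.
\]
Fubini gives $\|U\|^p_{L^p(\omega)}=\|u\|^p_{L^p(\omegainfty)}$, and since $u$ has compact support in $\re^m\times\omega$, so does $U$ in $\omega$; hence $U\in W^{s,p}_\omega(\re^{n-m})$ by mollification. The key estimate is $[u]^p_{s,p,\re^n}\ge [U]^p_{s,p,\re^{n-m}}$. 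For its proof, fix $(x'',y'')$, set $f=u(\cdot,x'')$, $g=u(\cdot,y'')$, and substitute $h=x'-y'$ in the $\re^m\times\re^m$ integral to get $\int_{\re^m}\|f-\tau_h g\|^p_{L^p(\re^m)}\,(|h|^2+|x''-y''|^2)^{-(n+sp)/2}\,dh$, where $\tau_h g(x'):=g(x'-h)$. The reverse triangle inequality in $L^p$ gives $\|f-\tau_h g\|_{L^p}\ge|U(x'')-U(y'')|$ for every $h$; integrating in $h$ via the identity above produces $|U(x'')-U(y'')|^p\,A_{n,m,s,p}|x''-y''|^{-(n-m)-sp}$. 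Integrating in $(x'',y'')$ and combining $C_{n,s,p}A_{n,m,s,p}=C_{n-m,s,p}$ with the $C/2$-prefactors yields exactly $[U]^p_{s,p,\re^{n-m}}$. Applying the cross-section Poincar\'e inequality to $U$ and using density of $C_c^\infty(\omegainfty)$ in $W^{s,p}_{\omegainfty}(\re^n)$ finishes this step.

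\textbf{Step 3 (Non-attainment).} Assume for contradiction that $u\not\equiv 0$ attains $P^2_{n,s,p}(\omegainfty)=P^2_{n-m,s,p}(\omega)$. Equality throughout the chain of Step~2 forces, in particular, the reverse triangle equality $\|f_{x''}-\tau_h f_{y''}\|_{L^p(\re^m)}=|U(x'')-U(y'')|$ for almost every $(x'',y'',h)$. For $1<p<\infty$, uniform convexity of $L^p$ makes this equality case force $f_{x''}$ and $\tau_h f_{y''}$ to be non-negative scalar multiples of one another; fixing $y_0''$ in the positivity set of $U$ and taking $h=0$ shows all non-trivial slices $f_{x''}$ are proportional to $\bar f:=f_{y_0''}$, so $u(x',x'')=\alpha(x'')\bar f(x')$. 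Varying $h$ then produces a multiplicative Cauchy relation $\bar f(x'+h)=\mu(h)\bar f(x')$ on $\re^m$; translation invariance of Lebesgue measure forces $\mu(h)\equiv 1$, so $\bar f$ must be constant and therefore, being in $L^p(\re^m)$, identically zero, contradicting $U(y_0'')>0$. Thus no minimiser can exist.

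The principal obstacle is Step~2: naively slicing in $x'$ and applying Poincar\'e on each vertical slice $u(x',\cdot)$ would need $[u]^p_{s,p,\re^n}\ge\int_{\re^m}[u(x',\cdot)]^p_{s,p,\re^{n-m}}\,dx'$, which \emph{fails} in general (a two-bump characteristic-function example produces the reverse inequality). Replacing the vector-valued slice $u(x',\cdot)$ with the scalar profile $U(x'')$ and invoking only the reverse triangle inequality in the $x'$-direction bypasses this obstruction, and gives an argument uniform in $p\in(1,\infty)$, replacing the Plancherel argument available only for $p=2$ in \cite{CCRS}.
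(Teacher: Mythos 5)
Your Step 1 is essentially the paper's own upper bound (part (2) of \cref{useful prop}): tensorize an almost-minimizer of the cross-section problem with a spreading cutoff and kill the cross term as $L\to\infty$ via the reduction identity $C_{n,s,p}\Theta_{m,n,p}=C_{n-m,s,p}$ of \cref{reduction formula}; only the bookkeeping (an $(1+\epsilon)$-Young inequality instead of Minkowski) differs. Steps 2 and 3, however, take a genuinely different route. The paper extends the strictly positive first eigenfunction $W$ of \cref{cross section EV} trivially in the $\re^m$-directions, tests against $|v|^p/W^{p-1}$, and needs the discrete Picone inequality (\cref{picone}), \cref{useful1}, and a mollification argument to legitimize that test function; non-attainment is then obtained from simplicity (\cref{simplicity}) and translation invariance, forcing a minimizer to be independent of $X_1$ and hence to have infinite $L^p$-norm. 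You instead pass to the scalar profile $U(x'')=\|u(\cdot,x'')\|_{L^p(\re^m)}$: the estimate $[u]^p_{s,p,\rn}\ge[U]^p_{s,p,\re^{n-m}}$ comes from the reverse Minkowski inequality in the $x'$-variables together with the same reduction identity, which bypasses eigenfunction existence, positivity, simplicity and Picone altogether (and in fact the inequality itself would survive $p=1$), and your non-attainment argument is the equality case of Minkowski (strict convexity of $L^p$ for $1<p<\infty$), which forces all nontrivial slices to be positive multiples of translates of one profile and that profile to be essentially translation invariant, hence zero. This is a clean, more self-contained alternative, and your computations of the constants (the radial identity and $C_{n,s,p}A_{n,m,s,p}=C_{n-m,s,p}$) match \cref{reduction formula} exactly.

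One step does need honest justification. In Step 3 you apply the Step 2 chain to a putative minimizer $u\in W^{s,p}_{\omegainfty}(\rn)$, not to a function in $C_c^\infty(\omegainfty)$. Besides the easy fact that $u=0$ a.e.\ outside $\omegainfty$ (the approximating sequence is Cauchy in $L^p(\rn)$), you need $U\in W^{s,p}_{\omega}(\re^{n-m})$: knowing only that $U$ vanishes outside $\omega$ with finite seminorm is not enough, because $\omega$ is an arbitrary bounded open set and $P^2_{n-m,s,p}(\omega)$ is an infimum over the closure of $C_c^\infty(\omega)$, which for rough $\omega$ can be a strictly smaller class than ``vanishing a.e.\ outside $\omega$''. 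Without this, the middle inequality $[U]^p_{s,p,\re^{n-m}}\ge P^2_{n-m,s,p}(\omega)\|U\|^p_{L^p(\omega)}$ — and hence the forced equality you exploit — is not available. The gap is fixable in a few lines: take $u_k\in C_c^\infty(\omegainfty)$ converging to $u$, observe the contraction property $\|U_k-U\|_{L^p}\le\|u_k-u\|_{L^p(\omegainfty)}$ and the bound $[U_k]_{s,p,\re^{n-m}}\le[u_k]_{s,p,\rn}$, and use reflexivity for $1<p<\infty$ together with weak closedness of the subspace $W^{s,p}_{\omega}(\re^{n-m})$ to conclude $U\in W^{s,p}_{\omega}(\re^{n-m})$. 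Finally, in the rigidity argument you cannot literally ``take $h=0$'': the Minkowski equality holds only for a.e.\ $(x'',y'',h)$, so fix one admissible $h_0$ (or compare two admissible translates); the conclusion, and the positivity of the proportionality factors giving $\mu\equiv1$, are unaffected.
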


	\begin{remark}\label{rev-rem}
			Recall that we took into account, the constant $\frac{C_{n,s,p}}{2}$ (defined in \cref{const flap}) while defining the seminorm in \cref{seminorm}. If one ignores this constant in the definition of the seminorm, then one would get an additional multiplicative constant in \cref{Eq2} in place of plain equality in \cref{best constants}.
	\end{remark}
	\smallskip
	
	The strategy for the proof of \cref{best constants}, done in \cref{sec:main}, is somewhat analogous to the local case. It is as follows: the constant $P^2_{n,s,p}(\omegainfty)$ is bounded above by the constant $P^2_{n,s,p}(\omega)$ (see (2) of \cref{useful prop}). For the special case $p=2$, the regularity of the first eigenfunction of \cref{cross section EV} is extensively used for the bounded below case. However, for general $p>1$, we do not have such regularity theory of the first eigenfunction of \cref{cross section EV}. We use simple approximation argument and discrete Picone inequality (see, \cref{picone}) to prove this part.
	The last part of the theorem is proven via contradiction, where the geometry of the domain has played an important role. Note that the use of discrete Picone inequality forces us to exclude the case $p=1$ from the statement of the \cref{best constants}.\smallskip

	Next, we deal with the case of \textbf{RFPI}. As above, we show that the best constant for \textbf{RFPI} for a strip is equal to that of its cross-section.
	
	\begin{theorem}\label{Poincare 1} Let $0<s<1$, $1\leq p<\infty$ and
		$\omegainfty=(-1,1)\times\R^{n-1}\subset\rn$, then we have the following:
		\begin{enumerate}
			\item $P^1_{n,s,p}(\omegainfty)=P^1_{1,s,p}((-1,1))=0$, if $0<s\leq\frac{1}{p}$.
			\item
			$
			P^1_{n,s,p}(\omegainfty)=P^1_{1,s,p}((-1,1)).
			$ Consequently, $P^1_{n,s,p}(\omegainfty)>0$, if $\frac{1}{p}<s<1$. 
		\end{enumerate}
	\end{theorem}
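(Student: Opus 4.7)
My plan has two stages: an upper bound $P^1_{n,s,p}(\omegainfty)\le P^1_{1,s,p}((-1,1))$ via product test functions (which also yields Part~(1) at once, since the $1$D constant vanishes for $0<s\le 1/p$ by \cite{AnWa,War}), followed by a matching lower bound in Part~(2) via the discrete Picone inequality applied to the $1$D first eigenfunction lifted constantly in the $x'$-variable. For the upper bound, fix $\phi\in C_c^\infty((-1,1))$ and $\eta\in C_c^\infty(\re^{n-1})$ and test with $u_R(x_1,x'):=\phi(x_1)\eta(x'/R)\in C_c^\infty(\omegainfty)$. Expanding $u_R(x)-u_R(y)=[\phi(x_1)-\phi(y_1)]\eta(x'/R)+\phi(y_1)[\eta(x'/R)-\eta(y'/R)]$ and using $|a+b|^p\le(1+\varepsilon)|a|^p+C_\varepsilon|b|^p$ separates $[u_R]_{s,p,\omegainfty}^p$ into a principal and a cross piece. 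Carrying out the $y'$-integration via
\[
\int_{\re^{n-1}}\frac{dz'}{(|x_1-y_1|^2+|z'|^2)^{(n+sp)/2}}=C(n,s,p)\,|x_1-y_1|^{-1-sp},
\]
together with the identity $C_{n,s,p}\,C(n,s,p)=C_{1,s,p}$ (a direct computation from \cref{const flap}), collapses the principal piece to exactly $R^{n-1}\|\eta\|_{L^p}^p\,[\phi]_{s,p,(-1,1)}^p$; the cross piece is $O(R^{n-1-sp})$ via $|\eta(x'/R)-\eta(y'/R)|\le\min(2\|\eta\|_\infty,\|\nabla\eta\|_\infty|x'-y'|/R)$ and the same kernel reduction. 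Dividing by $\|u_R\|_{L^p}^p=R^{n-1}\|\phi\|_{L^p}^p\|\eta\|_{L^p}^p$, sending $R\to\infty$, then $\varepsilon\to 0$, and minimising over $\phi$ delivers the upper bound.

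For the lower bound in Part~(2), assume $1/p<s<1$, and let $\phi_1\in W^{s,p}_0((-1,1))$ denote the positive first eigenfunction of the $1$D regional fractional $p$-Laplacian. Set $U(x_1,x'):=\phi_1(x_1)$ on $\omegainfty$. The same kernel identity shows the pointwise equation
\[
C_{n,s,p}\int_{\omegainfty}\frac{|U(x)-U(y)|^{p-2}(U(x)-U(y))}{|x-y|^{n+sp}}\,dy\,=\,P^1_{1,s,p}((-1,1))\,U(x)^{p-1},\qquad x\in\omegainfty,
\]
i.e.\ $U$ is a strong solution of the regional eigenvalue problem with eigenvalue $P^1_{1,s,p}((-1,1))$, even though $U\notin L^p(\omegainfty)$. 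Given $v\in C_c^\infty(\omegainfty)$ with $v\ge 0$, the quotient $V:=v^p/U^{p-1}$ is bounded and compactly supported inside $\omegainfty$; the discrete Picone inequality (\cref{picone}) applied pointwise gives $|U(x)-U(y)|^{p-2}(U(x)-U(y))(V(x)-V(y))\le|v(x)-v(y)|^p$. Integrating over $\omegainfty\times\omegainfty$ with weight $(C_{n,s,p}/2)/|x-y|^{n+sp}$ and then using the $x\leftrightarrow y$ symmetry of the Picone integrand, the left-hand side rewrites as $\int_{\omegainfty}V(x)\cdot P^1_{1,s,p}((-1,1))U(x)^{p-1}\,dx=P^1_{1,s,p}((-1,1))\|v\|_{L^p}^p$, yielding $P^1_{1,s,p}((-1,1))\|v\|_{L^p}^p\le[v]_{s,p,\omegainfty}^p$. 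Replacing $v$ by $|v|$ removes the sign restriction, and density of $C_c^\infty(\omegainfty)$ in $W^{s,p}_0(\omegainfty)$ extends the inequality to all admissible test functions.

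The main obstacle is the rigorous justification of the symmetrisation/integration-by-parts step in the Picone argument, because $U\notin L^p(\omegainfty)$ leaves absolute integrability of the Picone integrand not immediate. I would handle this by truncating: set $U_R(x_1,x'):=\phi_1(x_1)\chi(x'/R)$ with $\chi\in C_c^\infty(\re^{n-1})$ equal to $1$ on a ball containing $\operatorname{supp}v$, apply Picone for $U_R$ (where absolute integrability is automatic since $U_R$ is compactly supported), and pass to the limit $R\to\infty$; the deviation of the regional fractional $p$-Laplacian of $U_R$ from $P^1_{1,s,p}((-1,1))\phi_1^{p-1}$ on $\operatorname{supp}v$ is $O(R^{-1-sp})$ by essentially the cross-term estimate of the upper bound. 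Ancillary points---positivity of $\phi_1$, boundedness of $V$ on $\operatorname{supp}v$ (since $v$ vanishes near the lateral boundary), and the seminorm inequality $[|v|]\le[v]$---are routine.
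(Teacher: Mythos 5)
Your upper bound and Part (1) are fine and essentially reproduce the paper's own route: the product test functions $\phi(x_1)\eta(x'/R)$ with the kernel reduction $C_{n,s,p}\Theta_{n-1,n,p}=C_{1,s,p}$ are exactly \cref{reduction formula} and \cref{useful prop}, and Part (1) then follows from \cref{two spaces equal}. For the lower bound in Part (2), however, you depart from the paper, which does not use eigenfunctions at all: it slices the regional seminorm into one-dimensional line integrals via the Loss--Sloane identity (\cref{lemma:Loss Sloan}), applies the one-dimensional Poincar\'e constant on each chord of the strip (of length $2/|w_1|$, rescaled by \cref{prop:elementary properties}), and recovers the sharp constant from the Beta-function identity \cref{eq:cos varphi 1 integral} through \cref{lemma:angle condition}. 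Your alternative is to transplant the Picone argument that the paper uses for $P^2$ (proof of \cref{best constants}) to the regional setting, with the lifted $1$D regional eigenfunction in place of $W$.

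The genuine gap is that this transplant presupposes facts about the first eigenfunction of the $1$D \emph{regional} fractional $p$-Laplacian that are neither proved nor available in the paper. Existence of a minimizer $\phi_1$ for $P^1_{1,s,p}((-1,1))$ when $sp>1$ is fine (compactness), and boundedness/continuity follow from the one-dimensional Morrey embedding; but strict positivity of $\phi_1$ on $(-1,1)$ --- which you need both to apply \cref{picone} and to make $V=v^p/\phi_1^{p-1}$ bounded on $\operatorname{supp}v$ --- requires a strong minimum principle for the regional operator, and the paper's \cref{simplicity} (Brasco--Parini) covers only the non-regional eigenvalue $P^2$, not $P^1$. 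More seriously, your claim that the lifted function $U$ is a \emph{strong} (pointwise, principal-value) solution of the regional equation on $\Omega_\infty$ is unjustified: the H\"older exponent $s-1/p$ coming from the embedding is far too weak to guarantee convergence of the principal value for general $p$ (one would need an exponent exceeding $sp/(p-1)$), and the paper explicitly remarks that such regularity theory is unavailable for $p\neq 2$ --- this is precisely why its $P^2$ argument is phrased through weak formulations (\cref{useful1}, \cref{bonder}) and a mollification of the quotient. Your truncation step, whose error estimate is stated in terms of pointwise values of the regional operator of $U_R$ on $\operatorname{supp}v$, therefore has to be recast entirely in weak form (a regional analogue of \cref{useful1} plus a regional version of \cref{bonder}), and the membership of $v^p/\phi_1^{p-1}$ in the admissible test class needs the kind of approximation argument the paper carries out for \cref{best constants} rather than being routine. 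These points are plausibly repairable, but as written they are real gaps, and they are exactly the difficulties the paper's slicing proof of \cref{Poincare 1} is designed to bypass.
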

	\smallskip

	The proof of  \cref{Poincare 1} goes along the same line as it is done in \cite{CCRS} for $p=2$ but with necessary modifications. However, the method of the proof differs significantly from that of \cref{best constants} and hence, in this case, we must stick to the case $m=1$, $\omega=(-1,1)$,.
	\smallskip

	Finally, we come to our last main result, which shows the asymptotic behaviour of the first eigenvalue of \cref{nonlocal EV}. 
	
	\begin{theorem}\label{l to infinity} 
		Let $0<s<1$, $1<p<\infty$, $\ell>0$ and $\Omega_\ell=\ell\omega_1\times\omega$ in $\rn$ with $1\leq m<n$, where $\omega_1,\ \omega$ are bounded open subsets of $\re^m$ and $\re^{n-m}$ respectively. We then have
		$$P_{n-m,s,p}^2(\omega)\leq P^2_{n,s,p}(\Omega_{\ell})\leq P_{n-m,s,p}^2(\omega)+\frac{C_1}{\ell^s}+\frac{C_2}{\ell^{sp}},$$
		where $C_1,C_2>0$ are constants independent of $\ell.$ Furthermore, if $\Omega_{\infty}=\bigcup_{\ell>0}\Omega_{\ell}$ 
		$$\lim\limits_{\ell\to\infty}P^2_{n,s,p}(\Omega_{\ell})=P^2_{n-m,s,p}(\omega)=P^2_{n,s,p}(\Omega_\infty).$$
	\end{theorem}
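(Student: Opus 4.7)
The plan is to sandwich $P^2_{n,s,p}(\Omega_\ell)$ between the two prescribed bounds and then invoke \cref{best constants} to identify the limit with $P^2_{n,s,p}(\Omega_\infty)$. The lower bound is immediate: since $\Omega_\ell \subset \Omega_\infty := \R^m\times\omega$, any $u \in W^{s,p}_{\Omega_\ell}(\R^n)$ extended by zero belongs to $W^{s,p}_{\Omega_\infty}(\R^n)$ with the same Rayleigh quotient, so $P^2_{n,s,p}(\Omega_\ell) \geq P^2_{n,s,p}(\Omega_\infty) = P^2_{n-m,s,p}(\omega)$, the last equality being \cref{best constants}.

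For the upper bound I would construct an explicit test function. Let $\psi$ be a minimizer of $P^2_{n-m,s,p}(\omega)$ (which exists by the standard compact embedding, since $\omega$ is bounded), fix $\phi \in C_c^\infty(\omega_1)\setminus\{0\}$, and set $\phi_\ell(x) := \phi(x/\ell)$, supported in $\ell\omega_1$. The function $u_\ell(x,y) := \phi_\ell(x)\psi(y)$ belongs to $W^{s,p}_{\Omega_\ell}(\R^n)$. Writing
$$\phi_\ell(x_1)\psi(y_1) - \phi_\ell(x_2)\psi(y_2) = \phi_\ell(x_1)\bigl(\psi(y_1)-\psi(y_2)\bigr) + \bigl(\phi_\ell(x_1)-\phi_\ell(x_2)\bigr)\psi(y_2),$$
the Minkowski inequality in $L^p(\R^n\times\R^n)$ gives $[u_\ell]_{s,p,\R^n} \leq A_\ell + B_\ell$, where $A_\ell^p$ and $B_\ell^p$ are the two corresponding weighted double integrals. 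The crucial observation is that integrating $x_2$ out of $A_\ell^p$ via $w=(x_2-x_1)/|y_1-y_2|$ produces a radial Beta integral over $\R^m$ whose value combines with the normalization $C_{n,s,p}$ (see \cref{const flap}) to reproduce \emph{exactly} $C_{n-m,s,p}$, yielding
$$A_\ell^p = \|\phi_\ell\|_{L^p(\R^m)}^p\,[\psi]_{s,p,\R^{n-m}}^p = P^2_{n-m,s,p}(\omega)\,\|\phi_\ell\|_{L^p(\R^m)}^p\,\|\psi\|_{L^p(\omega)}^p.$$
Symmetrically, integrating $y_1$ out of $B_\ell^p$ gives $B_\ell^p = [\phi_\ell]_{s,p,\R^m}^p\,\|\psi\|_{L^p(\omega)}^p$. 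The scaling $\phi_\ell(\cdot)=\phi(\cdot/\ell)$ yields $\|\phi_\ell\|_{L^p}^p=\ell^m\|\phi\|_{L^p}^p$ and $[\phi_\ell]_{s,p,\R^m}^p=\ell^{m-sp}[\phi]_{s,p,\R^m}^p$, so $B_\ell/A_\ell = O(\ell^{-s})$. Applying the elementary convexity bound $(A+B)^p \leq A^p + p\,2^{p-1}A^{p-1}B + 2^{p-1}B^p$ (obtained from the mean value theorem together with $(A+u)^{p-1}\leq 2^{p-1}(A^{p-1}+u^{p-1})$) and dividing by $\int_{\Omega_\ell}|u_\ell|^p = \ell^m\|\phi\|_{L^p}^p\|\psi\|_{L^p(\omega)}^p$ recovers, term by term, the main contribution $P^2_{n-m,s,p}(\omega)$, an error of size $C_1/\ell^s$, and an error of size $C_2/\ell^{sp}$, with $C_1, C_2$ depending on $\phi,\psi,p,s,n,m$ but not on $\ell$.

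Letting $\ell\to\infty$ in the resulting sandwich yields $\lim_\ell P^2_{n,s,p}(\Omega_\ell) = P^2_{n-m,s,p}(\omega)$, and \cref{best constants} supplies the remaining identification $P^2_{n,s,p}(\Omega_\infty)=P^2_{n-m,s,p}(\omega)$. The step I expect to be most delicate is the Fubini-plus-substitution calculation inside $A_\ell^p$ and $B_\ell^p$: the calibration of $C_{n,s,p}$ is designed precisely so that iterated integration over the transverse variables reproduces $C_{n-m,s,p}$ (respectively $C_{m,s,p}$) with no stray multiplicative factor, which is what allows the leading term of the Rayleigh quotient to be identified \emph{on the nose} with $P^2_{n-m,s,p}(\omega)$ rather than with an unspecified multiple thereof, and hence what makes the quantitative rate $C_1/\ell^s+C_2/\ell^{sp}$ sharp in form.
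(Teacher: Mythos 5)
Your proposal is correct and follows essentially the same route as the paper: the lower bound via domain monotonicity together with \cref{best constants}, and the upper bound via a product test function with a dilated profile in the $\ell\omega_1$-directions, the reduction-formula computation of \cref{reduction formula} (exactly as in \cref{useful prop}), and the same elementary convexity inequality. The only cosmetic differences are that the paper tests with $W\in C_c^\infty(\omega)$ nearly attaining the infimum (avoiding the routine check that $\phi_\ell\otimes\psi\in W^{s,p}_{\Omega_\ell}(\R^n)$ when $\psi$ is the actual minimizer), and that the final identification of $P^2_{n,s,p}(\Omega_\infty)$ is better obtained from the sandwich $\Omega_\ell\subseteq\Omega_\infty\subseteq\R^m\times\omega$ and monotonicity rather than by assuming $\Omega_\infty=\R^m\times\omega$, since $\bigcup_{\ell>0}\ell\omega_1$ need not exhaust $\R^m$.
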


	\smallskip

	This article is organized in the following way: In \cref{sec: known results} we recall some results, already known in the literature. In \cref{sec:main} we give proofs of \cref{Poincare 1,best constants,l to infinity}.

	\section{Some known results and conventions} \label{sec: known results}
	Here we briefly discuss the notations that we shall use throughout the paper.
	
	\begin{itemize}
		\item $s$ will always be understood to be in $(0,1)$.
		\item For any positive integer $n$ and a measurable set $\Omega\subset\R^n$ we write $\mathcal{L}^n(\Omega)$ to denote the Lebesgue measure of $\Omega$, or shortly $|\Omega|$ if $n$ is understood from the context.
		
		\item $B_R(x)$ denotes a ball of radius $R$ centered at $x.$ We shall also write $B_R$ for $B_R(0)$.
		
		\item $\sph^{m-1}$ is the unit sphere in the Euclidean space $\re^m.$
		
		\item $\mathcal{H}^{k}$ denotes the $k$-dimensional Hausdorff measure, so that
		\begin{equation}
		\label{eq:surface of S n-1}
		\mathcal{H}^{n-1}\left(\mathbb{S}^{n-1}\right)=\frac{2\pi^{\frac{n}{2}}}
		{\Gamma\left(\frac{n}{2}\right)},\quad\text{where $\Gamma$ is the standard gamma function}.
		\end{equation}
		
		\item The beta function, for $x,y>0$, is defined by
		\begin{equation}\label{eq:properties of Beta}
		B(x,y):=\int_0^1t^{x-1}(1-t)^{y-1}dt=
		B(x,y)=2\int_0^{\frac{\pi}{2}}(\sin\theta)^{2x-1}(\cos\theta)^{2y-1}d\theta=\frac{\Gamma(x)\Gamma(y)}{\Gamma(x+y)}.
		\end{equation}
		
	\end{itemize}

	We now state some definitions and results, already known in literature, which we shall be using in the subsequent sections in this article. But before defining these, we would like to recall a result which follows from \cite[Lemma~2.7]{DeBoRi}.
	\begin{lemma}\label{bonder}
		Let $\Omega\subset \R^{n}$ be an open set, $p>1$. Then
		\begin{multline*}
		\int_{\R^{n}}\int_{\R^{n}}\frac{ |u(x)-u(y)|^{p-2}(u(x)-u(y))\psi(x)}{|x-y|^{n+sp}}dxdy\\ =\frac{1}{2}\int_{\R^{n}}\int_{\R^{n}}\frac{ |u(x)-u(y)|^{p-2}(u(x)-u(y))(\psi(x)-\psi(y))}{|x-y|^{n+sp}}dxdy,
		\end{multline*}
		whenever the integral in the left hand side is finite or $[u]_{s,p,\rn},\ \|\psi\|_{L^p(\Omega)}<\infty$. Here the integral in the LHS is to be understood in the principle value (P.V.) sense.
	\end{lemma}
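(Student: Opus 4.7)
The plan is to exploit the antisymmetry of the kernel
$$K_u(x,y) := \frac{|u(x)-u(y)|^{p-2}(u(x)-u(y))}{|x-y|^{n+sp}},$$
which, since $|u(x)-u(y)|^{p-2}=|u(y)-u(x)|^{p-2}$ and $|x-y|=|y-x|$, satisfies $K_u(y,x)=-K_u(x,y)$. The identity is then formally a standard symmetrization trick: pair an antisymmetric bivariate kernel with a one-variable function $\psi(x)$ by averaging with its swap $\psi(y)$. The task is to justify this rigorously given that the left-hand side is only defined as a principal value.

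First I would truncate away from the diagonal: for $\epsilon>0$ set $D_\epsilon:=\{(x,y)\in\rn\times\rn:|x-y|>\epsilon\}$ and consider
$$I_\epsilon(\psi):=\iint_{D_\epsilon}K_u(x,y)\,\psi(x)\,dx\,dy.$$
On $D_\epsilon$ the singular factor $|x-y|^{-(n+sp)}$ is bounded above by $\epsilon^{-(n+sp)}$ near the diagonal and decays at infinity, so one verifies absolute integrability on $D_\epsilon$ under either hypothesis of the lemma by an elementary estimate (using $|u(x)-u(y)|^{p-1}\le 2^{p-2}(|u(x)|^{p-1}+|u(y)|^{p-1})$ together with Hölder and the finiteness of $[u]_{s,p,\rn}$, respectively of the full LHS). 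Fubini therefore applies on $D_\epsilon$. Performing the change of variables $(x,y)\mapsto(y,x)$ and using $K_u(y,x)=-K_u(x,y)$ yields the companion identity
$$I_\epsilon(\psi)=-\iint_{D_\epsilon}K_u(x,y)\,\psi(y)\,dx\,dy,$$
and averaging these two representations gives
$$I_\epsilon(\psi)=\frac{1}{2}\iint_{D_\epsilon}K_u(x,y)\,(\psi(x)-\psi(y))\,dx\,dy.$$

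It then remains to pass to the limit $\epsilon\to0^+$. The left-hand side converges to the stated P.V.\ integral by definition. For the right-hand side the key observation is that the symmetric integrand is absolutely integrable on the full product $\rn\times\rn$ (without any principal value), which allows dominated convergence. Indeed, Hölder's inequality with conjugate exponents $p/(p-1)$ and $p$, applied after splitting the weight as $|x-y|^{-(n+sp)} = |x-y|^{-(n+sp)(p-1)/p}\cdot|x-y|^{-(n+sp)/p}$, gives
$$\iint_{\rn\times\rn}\frac{|u(x)-u(y)|^{p-1}|\psi(x)-\psi(y)|}{|x-y|^{n+sp}}\,dx\,dy\;\le\;[u]_{s,p,\rn}^{p-1}\,[\psi]_{s,p,\rn},$$
which controls the limiting integrand (in the $L^p$ alternative, one first approximates $\psi$ by test functions or uses its support properties to reduce to this bound). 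Taking $\epsilon\to0$ completes the proof.

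The main obstacle I expect is the technical step of justifying absolute integrability cleanly under the weaker hypothesis ``$[u]_{s,p,\rn}, \|\psi\|_{L^p(\Omega)}<\infty$'' rather than under the stronger but cleaner ``$[u]_{s,p,\rn},[\psi]_{s,p,\rn}<\infty$'': one must exploit that $\psi$ is supported in $\Omega$ (so differences across the boundary are controlled by $|\psi|$ alone) and carry out a density or truncation reduction to $C_c^\infty(\Omega)$. All other steps are routine once the symmetrization is set up.
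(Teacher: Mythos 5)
Your truncation-and-symmetrization argument is the same route as the paper's proof: fix $\epsilon>0$, check that the truncated integral $I_\epsilon$ over $\{|x-y|\geq\epsilon\}$ is absolutely convergent, swap $x$ and $y$ using the antisymmetry of the kernel, average the two representations, and let $\epsilon\to0$ so that the left-hand side becomes the principal value by definition. The paper's finiteness step is precisely the H\"older splitting you mention, $|x-y|^{-(n+sp)}=|x-y|^{-(n+sp)/p'}\,|x-y|^{-(n+sp)/p}$, which gives the quantitative bound $|I_\epsilon|\le C\,[u]_{s,p,\rn}^{p-1}\,\|\psi\|_{L^p(\Omega)}\,\epsilon^{-s}$. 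Your parenthetical alternative via $|u(x)-u(y)|^{p-1}\le 2^{p-2}\left(|u(x)|^{p-1}+|u(y)|^{p-1}\right)$ is not usable here, since it would require $u\in L^p(\rn)$, which is not assumed; only the H\"older route works.

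The one genuine defect is your final limit passage. You invoke dominated convergence through the bound $[u]_{s,p,\rn}^{p-1}[\psi]_{s,p,\rn}$, but $[\psi]_{s,p,\rn}<\infty$ is not among the hypotheses: under the stated alternative $[u]_{s,p,\rn},\ \|\psi\|_{L^p(\Omega)}<\infty$ the symmetrized integrand need not be absolutely integrable on all of $\rn\times\rn$, because near the diagonal the only available control is $|\psi(x)-\psi(y)|\le|\psi(x)|+|\psi(y)|$, and $\int_{|x-y|<\epsilon}|x-y|^{-(n+sp)}dy$ diverges for every $x$; neither $L^p$-density nor the support of $\psi$ repairs this, since the problem occurs inside $\Omega$, not across $\partial\Omega$. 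The paper sidesteps the issue entirely: the right-hand side is read as the limit of the truncated symmetric integrals, so the identity follows from the truncated equality and the definition of the principal value, with no dominated convergence needed; absolute convergence of the right-hand side is only available (and only used) in the applications where $\psi$ itself belongs to $W^{s,p}_{\Omega}(\rn)$. If you delete that last step, or add the hypothesis $[\psi]_{s,p,\rn}<\infty$ when you want the right-hand side as a genuine Lebesgue integral, your argument coincides with the paper's.
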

	\begin{proof}
		Let $u,\psi\in W^{s,p}_\Omega (\R^n)$. Set
		\begin{multline*}
		I:=\mbox{P.V.}\int_{\R^{n}}\int_{\R^{n}}\frac{ |u(x)-u(y)|^{p-2}(u(x)-u(y))\psi(x)}{|x-y|^{n+sp}}dxdy\\
		:=\lim\limits_{\epsilon\to 0}\iint_{ |x-y|\geq\epsilon}\frac{ |u(x)-u(y)|^{p-2}(u(x)-u(y))\psi(x)}{|x-y|^{n+sp}}dxdy=\lim\limits_{\epsilon\to 0} I_\epsilon.
		\end{multline*}
		If, $I_\epsilon$ is finite, we can write, $I=\frac{1}{2}\lim\limits_{\epsilon\to 0}(I_\epsilon+I_\epsilon)$, then we make a change of variable, by interchanging $x$ and $y$, in the second $I_\epsilon$ in the RHS and then we add the two terms in the RHS to conclude the equality.
		
		It remains to show that $[u]_{s,p,\rn},\ \|\psi\|_{L^p(\Omega)}<\infty$ implies finiteness of $I_\epsilon$. The following calculation is done in \cite[lemma~2.3]{BrCi}. However, we include it here for the sake of completeness. For a fixed $\epsilon>0$, we have
		\begin{multline*}
		\left|	\iint_{ |x-y|\geq\epsilon}\frac{ |u(x)-u(y)|^{p-2}(u(x)-u(y))\psi(x)}{|x-y|^{n+sp}}dxdy \right|\\
		\leq \iint_{ |x-y|\geq\epsilon}\frac{ |u(x)-u(y)|^{p-1}|\psi(x)|}{|x-y|^{n+sp}}dxdy\\
		= \iint_{ |x-y|\geq\epsilon}\frac{ |u(x)-u(y)|^\frac{p}{p'}|\psi(x)|}{|x-y|^\frac{n+sp}{p'}|x-y|^\frac{n+sp}{p}}dxdy\\
		\leq \left(\iint_{ |x-y|\geq\epsilon}\frac{ |u(x)-u(y)|^p}{|x-y|^{n+sp}}dxdy\right)^\frac{1}{p'}\left( \iint_{ |x-y|\geq\epsilon}\frac{|\psi(x)|^{p}}{|x-y|^{n+sp}}dxdy \right)^\frac{1}{p}\\
		\leq [u]_{s,p,\rn}^\frac{p}{p'} \left( \iint_{ |x-y|\geq\epsilon}\frac{|\psi(x)|^{p}}{|x-y|^{n+sp}}dxdy \right)^\frac{1}{p}
		=C [u]_{s,p,\rn}^\frac{p}{p'}\|\psi\|_{L^p(\Omega)}\epsilon^{-s}<\infty.
		\end{multline*}
	
	\end{proof}
	\smallskip
	
	Using this lemma, we define
	
	\begin{definition}
		\label{weak formulation}
		Let $\omega\subset \R^{n-m}$ be a bounded open set. A function $u\in W^{s,p}_{\omega}(\R^{n-m})$ is said to be a \textit{weak solution} of \cref{cross section EV} if $u$ satisfies
		\begin{multline*}
		\frac{C_{n-m,s,p}}{2}\int_{\R^{n-m}}\int_{\R^{n-m}}\frac{ |u(x)-u(y)|^{p-2}(u(x)-u(y))(\psi(x)-\psi(y))}{|x-y|^{n-m+sp}}dydx\\
		=C_{n-m,s,p}\int_{\R^{n-m}}\int_{\R^{n-m}}\frac{ |u(x)-u(y)|^{p-2}(u(x)-u(y))\psi(x)}{|x-y|^{n-m+sp}}dydx\\
		=P^2_{n-m,s,p}(\omega)\int_{\omega}|u(x)|^{p-2}u(x)\psi(x)dx,\text{ for all }\psi\in W^{s,p}_{\omega}(\R^{n-m}).
		\end{multline*}
		Any such $u$, not identically zero, is also called an \textit{eigenfunction} of \cref{cross section EV}, corresponding to the eigenvalue $P^2_{n-m,s,p}(\omega)$.
	\end{definition}
	
	\begin{lemma}[\protect{see \cite{BrPa}}]\label{simplicity}
		The constant $P^2_{n,s,p}(\omega)$ is the first eigenvalue of the problem \cref{cross section EV}, and the corresponding eigenfunction is  strictly positive in the domain.	Moreover, the corresponding eigenspace is of dimension one. 
	\end{lemma}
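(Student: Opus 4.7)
The plan is to establish in turn that $P^2_{n-m,s,p}(\omega)$ is attained, that it is the smallest eigenvalue, that the corresponding eigenfunction is strictly positive, and that the eigenspace is one-dimensional; this will follow the scheme of \cite{BrPa}. For the first two points I would apply the direct method. Take a minimizing sequence $(u_k)\subset W^{s,p}_\omega(\R^{n-m})$ for the Rayleigh quotient defining $P^2_{n-m,s,p}(\omega)$, normalised by $\|u_k\|_{L^p(\omega)}=1$; the bound on $[u_k]_{s,p,\R^{n-m}}$ and reflexivity (which uses $1<p<\infty$) yield a weakly convergent subsequence in $W^{s,p}_\omega(\R^{n-m})$ with limit $u$. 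Since $\omega$ is bounded, the compact fractional embedding $W^{s,p}_\omega(\R^{n-m})\hookrightarrow L^p(\omega)$ upgrades this to strong convergence in $L^p(\omega)$, so $\|u\|_{L^p(\omega)}=1$, while weak lower semicontinuity of the convex seminorm gives $[u]_{s,p,\R^{n-m}}^p\leq P^2_{n-m,s,p}(\omega)$, showing that $u$ realises the infimum. Computing the first variation of the Rayleigh quotient at $u$ along directions $\psi\in W^{s,p}_\omega(\R^{n-m})$ and invoking \cref{bonder} produces the weak equation in \cref{weak formulation}, so $P^2_{n-m,s,p}(\omega)$ is an eigenvalue; conversely, for any other eigenpair $(\lambda,v)$, testing with $\psi=v$ gives $[v]_{s,p,\R^{n-m}}^p=\lambda\|v\|_{L^p(\omega)}^p\geq P^2_{n-m,s,p}(\omega)\|v\|_{L^p(\omega)}^p$, so $\lambda\geq P^2_{n-m,s,p}(\omega)$.

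\textbf{Positivity.} From $\bigl||u(x)|-|u(y)|\bigr|\leq |u(x)-u(y)|$ one sees that $|u|$ is also a minimizer, so we may assume $u\geq 0$. Strict positivity in $\omega$ then follows from the nonlocal Harnack inequality for nonnegative weak supersolutions of the fractional $p$-Laplacian, as used in \cite{BrPa}: if $u$ vanished at some interior point of $\omega$, a chain-of-balls argument based on Harnack would propagate the zero across the connected component, forcing $u\equiv 0$ and contradicting $\|u\|_{L^p(\omega)}=1$.

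\textbf{Simplicity and the main obstacle.} For one-dimensionality of the eigenspace, let $u$ and $v$ be two positive first eigenfunctions, normalised to equal $L^p(\omega)$-norms. I would use $v^p/(u+\varepsilon)^{p-1}$ and, symmetrically, $u^p/(v+\varepsilon)^{p-1}$ as admissible test functions in the weak formulation; combining the two resulting identities with the discrete Picone inequality (\cref{picone}) and letting $\varepsilon\downarrow 0$ by dominated convergence collapses a chain of inequalities to equality, and the equality case of Picone then forces $v/u$ to be constant on $\omega$. The step I expect to be the main obstacle is the positivity argument: in contrast to the local $p$-Laplacian, no ``off-the-shelf'' strong minimum principle is available, so the nonlocal Harnack inequality must be applied with care, in particular ensuring adequate control of the nonlocal tail of $u$ outside $\omega$ and uniformity of the Harnack constants along the chain of balls. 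The other two steps are essentially variational and Picone-based adaptations of the classical $p$-Laplacian arguments.
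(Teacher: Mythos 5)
The paper does not prove this lemma at all: it is quoted directly from \cite{BrPa} (see also \cite{BrFr,FrPa}), and your sketch --- direct method with the compact embedding $W^{s,p}_\omega(\R^{n-m})\hookrightarrow L^p(\omega)$, testing with an arbitrary eigenfunction to get minimality of the eigenvalue, and simplicity via the discrete Picone inequality with test functions $v^p/(u+\varepsilon)^{p-1}$ --- is essentially the argument of that reference, so it is correct and takes the same route. One small remark: for positivity the nonlocal structure makes life easier than your Harnack-chain plan suggests, since for a nonnegative, nontrivial weak supersolution the equation itself (or the strong minimum principle of \cite{BrFr,BrPa}) forces strict positivity at every point of $\omega$, even on components of $\omega$ where $u$ might a priori vanish, so no connectedness of $\omega$ or uniform control of Harnack constants along a chain is needed.
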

	\begin{lemma}[\protect{see \cite[Lemma~2.4]{loss}} ]
		\label{lemma:Loss Sloan}
		Let $p>0,$ $0<s<1$ and $\Omega\subset\R^n$ be a measurable set. Then for any $u\in C_c^{\infty}(\Omega)$
		\begin{align*}
		&2\int_{\Omega}\int_{\Omega}
		\frac{|u(x)-u(y)|^p}{|x-y|^{n+sp}}dxdy
		\smallskip
		\\
		=&\int_{\sph^{n-1}}d\mathcal{H}^{n-1}(w)
		\int_{\{x:\;x\cdot w=0\}}d\mathcal{H}^{n-1}(x)
		\int_{\{\ell:\,x+\ell w\in\Omega\}}
		\int_{\{t:\,x+tw\in\Omega\}}\frac{|u(x+\ell w)-u(x+tw)|^p}{|\ell-t|^{1+sp}}dtd\ell.
		\end{align*}
	\end{lemma}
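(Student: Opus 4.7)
The statement is a polar/coarea-type reorganisation of the Gagliardo double integral, writing it as an average over all directions $w\in\sph^{n-1}$ of one-dimensional seminorms along the lines $\{x_0+tw:t\in\R\}$ with $x_0$ ranging over the hyperplane $w^\perp$. The plan is therefore a straight change-of-variables argument, combining a polar decomposition of the displacement $y-x$ with a Fubini-type decomposition of the base point $x$ along and perpendicular to the chosen direction. The factor $2$ on the left-hand side will fall out naturally from a symmetrisation step at the end.

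First I would rewrite the left-hand side. Extending $u$ by zero outside $\Omega$ and introducing the new variable $z=y-x$, the double integral becomes
\begin{equation*}
\int_{\R^n}\int_{\R^n}\mathbf{1}_\Omega(x)\,\mathbf{1}_\Omega(x+z)\,\frac{|u(x)-u(x+z)|^p}{|z|^{n+sp}}\,dz\,dx.
\end{equation*}
Now polar coordinates $z=rw$ with $r>0$, $w\in\sph^{n-1}$ give $dz=r^{n-1}\,dr\,d\mathcal{H}^{n-1}(w)$, and $r^{n-1}/r^{n+sp}=r^{-(1+sp)}$; after swapping the $w$- and $x$-integrals (justified in the next paragraph), the inner integral is against $dr\,dx$ for each fixed $w$.

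Next I would use, for each fixed $w$, the orthogonal decomposition $\R^n=w^\perp\oplus\R w$, writing $x=x_0+tw$ with $x_0\in w^\perp$ and $t\in\R$, so that Lebesgue measure factors as $dx=dt\,d\mathcal{H}^{n-1}(x_0)$. Then $x+rw=x_0+(t+r)w$, and the change of variables $\ell=t+r$ (for fixed $t$) has Jacobian $1$ and sends $\{r>0\}$ onto $\{\ell>t\}$. This transforms the expression into
\begin{equation*}
\int_{\sph^{n-1}}\!d\mathcal{H}^{n-1}(w)\!\int_{w^\perp}\!d\mathcal{H}^{n-1}(x_0)\!\iint_{\substack{\ell>t\\ x_0+tw,\,x_0+\ell w\in\Omega}}\!\frac{|u(x_0+\ell w)-u(x_0+tw)|^p}{|\ell-t|^{1+sp}}\,dt\,d\ell.
\end{equation*}
Since the integrand is symmetric in $(t,\ell)$, the restriction $\{\ell>t\}$ halves the full $\R^2$-integral, which produces exactly the factor $1/2$ relative to the right-hand side of the claimed identity; moving this factor to the other side gives the stated equality.

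The only real item to check is that every use of Fubini above is legitimate. Because $u\in C_c^\infty(\Omega)$, one has $|u(x)-u(x+rw)|^p\le C\min(r^p,1)$ uniformly in $w$, and a direct estimate
\begin{equation*}
\int_0^\infty \min(r^p,1)\,r^{-1-sp}\,dr=\int_0^1 r^{p-sp-1}\,dr+\int_1^\infty r^{-1-sp}\,dr<\infty
\end{equation*}
holds provided $p>0$ and $0<s<1$ (both assumed), while the compact support of $u$ kills the integrals in the remaining variables. Thus the integrand is absolutely integrable in all joint variables, and every swap of integrals is valid; this is the main (and essentially only) technical point. The rest is bookkeeping.
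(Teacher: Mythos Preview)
The paper does not supply its own proof of this lemma; it is quoted directly from Loss--Sloane \cite{loss}. Your argument is correct and is exactly the standard proof: pass to polar coordinates in the displacement $z=y-x$, slice the base point as $x=x_0+tw$ with $x_0\in w^\perp$, change $r\mapsto\ell=t+r$, and then symmetrise in $(t,\ell)$ to recover the factor $2$.

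One small point worth tightening in the Fubini paragraph: the pointwise bound $|u(x)-u(x+rw)|^p\le C\min(r^p,1)$ is not by itself integrable in $x$ over $\R^n$. You should make explicit that for each fixed $(r,w)$ the integrand vanishes unless $x\in K\cup(K-rw)$ with $K=\operatorname{supp}u$, a set of measure at most $2|K|$; combining this with the pointwise bound gives $\int_{\R^n}|u(x)-u(x+rw)|^p\,dx\le C\min(r^p,1)$ uniformly in $(r,w)$, after which the $r$- and $w$-integrations are finite exactly as you computed. With that clarification the justification is complete.
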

	
	\begin{lemma}[Discrete Picone inequality, \cite{BrFr}]\label{picone}
		Let $p\in(1,\infty)$ and let $f,g:\rn\to\re$ be two measurable functions with $f\geq 0,\;g>0$, then $L(f,g)\geq 0\text{ in }\rn\times\rn,$
		where
		$$
		L(f,g)(x,y)=|f(x)-f(y)|^p-|g(x)-g(y)|^{p-2}(g(x)-g(y))\left(\frac{ f(x)^p}{g(x)^{p-1}}-\frac{ f(y)^p}{g(y)^{p-1}}\right).
		$$
		The equality holds if and only if $f=\alpha g$ a.e. in $\rn$ for some constant $\alpha.$
	\end{lemma}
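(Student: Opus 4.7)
The inequality $L(f,g)(x,y)\ge 0$ is pointwise in the four values $a=f(x)\ge 0$, $b=f(y)\ge 0$, $c=g(x)>0$, $d=g(y)>0$, so my plan is to reduce it to an elementary inequality in these four real parameters and then invoke convexity of $t\mapsto t^p$. First observe that $L(f,g)(x,y)$ is invariant under $(x,y)\leftrightarrow(y,x)$, since both $\psi_p(c-d):=|c-d|^{p-2}(c-d)$ and the bracketed term $\frac{a^p}{c^{p-1}}-\frac{b^p}{d^{p-1}}$ change sign simultaneously. Hence I may assume $c\ge d$, so that $\psi_p(c-d)=(c-d)^{p-1}\ge 0$.

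Next, I normalise by setting $\theta:=d/c\in(0,1]$, $\alpha:=a/c\ge 0$, $\beta:=b/d\ge 0$. Writing $a=\alpha c$ and $b=\beta\theta c$ and dividing through by $c^p>0$, the inequality reduces to the single elementary inequality
\begin{equation*}
|\alpha-\theta\beta|^p\ \ge\ (1-\theta)^{p-1}\bigl(\alpha^p-\theta\beta^p\bigr),\qquad \alpha,\beta\ge 0,\ \theta\in(0,1].
\end{equation*}
The main obstacle is that the right-hand side can have either sign, so one has to split cases. For $\theta=1$ the right side vanishes and there is nothing to prove, so assume $\theta\in(0,1)$. If $\alpha^p\le\theta\beta^p$, the right-hand side is non-positive and the inequality is trivial. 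Otherwise, from $\alpha>\theta^{1/p}\beta\ge\theta\beta$ (using $\theta\in(0,1)$ and $1/p<1$) one gets $\alpha-\theta\beta>0$, so $\alpha$ can be written as the convex combination $\alpha=\theta\cdot\beta+(1-\theta)\cdot\frac{\alpha-\theta\beta}{1-\theta}$. Applying Jensen's inequality for $t\mapsto t^p$ on $[0,\infty)$ yields
\begin{equation*}
\alpha^p\ \le\ \theta\beta^p+(1-\theta)\Bigl(\tfrac{\alpha-\theta\beta}{1-\theta}\Bigr)^p=\theta\beta^p+\frac{(\alpha-\theta\beta)^p}{(1-\theta)^{p-1}},
\end{equation*}
which rearranges to the desired bound.

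For the equality statement, Jensen's inequality is strict unless the two arguments of the convex combination coincide, i.e.\ $\beta=(\alpha-\theta\beta)/(1-\theta)$, which simplifies to $\alpha=\beta$, that is $a/c=b/d$. In the trivial subcase $\alpha^p\le\theta\beta^p$, equality forces both sides to vanish, again leading to $a/c=b/d$ (after a brief check at the boundary $\theta=1$). Hence $L(f,g)(x,y)=0$ at a point is equivalent to $f(x)/g(x)=f(y)/g(y)$; demanding this for a.e.\ $(x,y)\in\rn\times\rn$ and using $g>0$ forces $f/g$ to be a.e.\ constant on $\rn$, i.e.\ $f=\alpha g$ a.e.\ for some constant $\alpha\ge 0$.
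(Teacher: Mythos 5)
Your proof is correct. Note that the paper itself does not prove \cref{picone} at all: it is quoted as a known result from Brasco--Franzina \cite{BrFr}, so there is no in-paper argument to compare against; what you have produced is a self-contained elementary proof of the cited lemma. Your route --- exploit the symmetry of $L$ in $(x,y)$, normalise by homogeneity to the scalar inequality $|\alpha-\theta\beta|^p\ge(1-\theta)^{p-1}(\alpha^p-\theta\beta^p)$, and conclude by strict convexity of $t\mapsto t^p$ via Jensen --- is in the same spirit as the original proof in \cite{BrFr}, which also rests on pointwise convexity estimates for $t\mapsto t^p$ (there organised through Young-type inequalities rather than an explicit convex-combination identity), and your reduction of the four-variable inequality to a one-parameter family is a clean way to package it. Two small remarks: you could avoid the case split on the sign of $\alpha^p-\theta\beta^p$ altogether by applying convexity of $t\mapsto|t|^p$ on all of $\R$ to the same decomposition $\alpha=\theta\beta+(1-\theta)\frac{\alpha-\theta\beta}{1-\theta}$, which gives the inequality (and, for $p>1$, the equality analysis) in one stroke; and for the ``if'' direction of the equality statement you should record the one-line verification that $f=\alpha g$ makes both terms of $L$ equal to $\alpha^p|g(x)-g(y)|^p$, which your write-up leaves implicit. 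The passage from ``$f(x)/g(x)=f(y)/g(y)$ for a.e.\ $(x,y)$'' to ``$f/g$ constant a.e.'' via Fubini is fine as stated.
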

	The following result is well known in literature. It follows directly from the definition of the Poincar\'e constant and of the Gagliardo seminorm.
	
	\begin{proposition}\label{prop:elementary properties}
		Let $0<s<1$ and $p\in[1,\infty)$ we have
		\begin{enumerate}
			\item\textbf{(Domain monotonicity:)} If $\Omega_1\subseteq\Omega_2\subset\rn,$ then $P^2_{n,s,p}(\Omega_2)\leq P^2_{n,s,p}(\Omega_1).$
			\smallskip
			
			\item\textbf{(Dilation:)} Let $\Omega\subset\rn$ be an open set and let $u\in W^{s,p}(t\Omega)$, for $t>0.$ We define $v_t(x)=u(tx)\in W^{s,p}(\Omega)$. Then $[u]_{s,p,t\Omega}^p=t^{n-sp}[v_t]_{s,p,\Omega}^p$, and furthermore
			$$P^1_{n,s,p}(\Omega)=t^{n-sp}P^1_{n,s,p}(t\Omega),\text{ and }P^2_{n,s,p}(\Omega)=t^{n-sp}P^2_{n,s,p}(\Omega).$$
		\end{enumerate}
	\end{proposition}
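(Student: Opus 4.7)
The plan is to prove both parts directly from the definitions; no deep tools are needed.

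For the domain monotonicity, the key observation is that if $\Omega_1\subseteq \Omega_2$, then $C_c^\infty(\Omega_1)\subseteq C_c^\infty(\Omega_2)$, and passing to closures gives $W^{s,p}_{\Omega_1}(\R^n)\subseteq W^{s,p}_{\Omega_2}(\R^n)$. For any nonzero $u$ in the smaller space, the seminorm $[u]_{s,p,\R^n}$ depends only on $u$ (not on which $\Omega_j$ is used), and since $u$ vanishes a.e.\ outside $\Omega_1$ we have $\int_{\Omega_1}|u|^p=\int_{\Omega_2}|u|^p$. Thus the Rayleigh-like quotient in the definition of $P^2_{n,s,p}$ takes the same value on $u$ whether computed with $\Omega_1$ or $\Omega_2$. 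Since the admissible class for $\Omega_2$ is larger while the values of the quotient on overlapping functions coincide, the infimum can only decrease, yielding $P^2_{n,s,p}(\Omega_2)\leq P^2_{n,s,p}(\Omega_1)$.

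For the dilation identity, I would use the change of variables $x=ta$, $y=tb$ in the double integral defining the Gagliardo seminorm. Noting $dx\,dy=t^{2n}\,da\,db$, $|x-y|^{n+sp}=t^{n+sp}|a-b|^{n+sp}$, and $u(ta)=v_t(a)$, the substitution produces
\[
[u]_{s,p,t\Omega}^p = \frac{C_{n,s,p}}{2}\iint_{\Omega\times\Omega}\frac{|v_t(a)-v_t(b)|^p}{|a-b|^{n+sp}}\,t^{2n-n-sp}\,da\,db = t^{n-sp}[v_t]_{s,p,\Omega}^p,
\]
which is the first assertion. The same substitution applied to $\|u\|_{L^p(t\Omega)}^p$ yields $t^n\|v_t\|_{L^p(\Omega)}^p$. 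Dividing the two identities and taking the infimum over admissible functions — using the fact that $u\mapsto v_t$ is a bijection of admissible classes (it carries $C_c^\infty(t\Omega)$ onto $C_c^\infty(\Omega)$, and applying the same substitution to a defining approximating sequence shows it is a homeomorphism of the respective closures) — one obtains the asserted scaling relations for both $P^1_{n,s,p}$ and $P^2_{n,s,p}$. For $P^2_{n,s,p}$, the relevant seminorm integrals are over $\R^n\times\R^n$, to which the same substitution applies unchanged, so the identical factor appears.

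Neither part presents a real obstacle; everything follows from the definitions together with Fubini and the change-of-variables formula. The only point requiring a brief verification is that dilation maps the admissible classes bijectively and preserves the relevant topologies, which is immediate upon applying the substitution to a defining $C_c^\infty$-approximating sequence.
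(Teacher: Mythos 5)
Your proof is the straightforward definition-chasing argument that the paper itself intends: the paper states this proposition without proof, remarking only that it follows from the definitions, and both halves of your reasoning (the inclusion $C_c^\infty(\Omega_1)\subseteq C_c^\infty(\Omega_2)$ passing to closures, with the observation that functions in the smaller class vanish a.e.\ outside $\Omega_1$ so the two Rayleigh quotients coincide; and the substitution $x=ta$, $y=tb$ for the dilation) are sound. One point needs correcting, however: your computation does \emph{not} deliver the scaling relation as printed in the statement. Dividing $[u]_{s,p,t\Omega}^p=t^{n-sp}[v_t]_{s,p,\Omega}^p$ by $\|u\|_{L^p(t\Omega)}^p=t^{n}\|v_t\|_{L^p(\Omega)}^p$ leaves the factor $t^{-sp}$, so taking infima gives $P^i_{n,s,p}(\Omega)=t^{sp}\,P^i_{n,s,p}(t\Omega)$ for $i=1,2$; the exponent $n-sp$ pertains only to the seminorm identity. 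The displayed formula $P^1_{n,s,p}(\Omega)=t^{n-sp}P^1_{n,s,p}(t\Omega)$ (and the repetition of $\Omega$ on both sides of the $P^2$ identity) is a typographical slip in the paper — indeed the paper itself later uses the $t^{sp}$ scaling, namely the factor $|w_1|^{sp}$ in the proof of the theorem on $\Omega_\infty=(-1,1)\times\R^{n-1}$. So rather than asserting that the division ``obtains the asserted scaling relations,'' you should state the corrected exponent explicitly; with that emendation, and with your (correct) remark that dilation carries $C_c^\infty(t\Omega)$ bijectively and bicontinuously onto $C_c^\infty(\Omega)$ so that the infima range over corresponding classes, your proof is complete.
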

	\smallskip
	
	\begin{remark}
		To the best of our knowledge, the domain monotonicity property for $P^1_{n,s,p}$ is not known in literature.
	\end{remark}
	
	The proof of the following well-known result, in the case $sp<1$ can be found in \cite[Theorem~3.4.3]{Tri} for bounded $C^\infty$-domains. For bounded Lipschitz domains, in the case $sp\leq 1$, this can be found in \cite[Section~2]{dyd}, \cite[Theorem 2.1]{AnWa}, \cite[Example 4.11]{War}. For various related results, we refer the reader to \cite{DyKi}.
	\begin{lemma}\label{two spaces equal}
		Let $p\in[1,\infty)$, and $\Omega$ be an open bounded set in $\rn$ with Lipschitz boundary. Then $W^{s,p}_0(\Omega)=W^{s,p}(\Omega)$ if $0<s\leq\frac{1}{p}$. In particular, in this case, we have $P^1_{n,s,p}(\Omega)=0$.
	\end{lemma}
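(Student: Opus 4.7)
The plan is to establish the density equality $W^{s,p}_0(\Omega)=W^{s,p}(\Omega)$ by a cutoff approximation argument; the assertion $P^1_{n,s,p}(\Omega)=0$ then drops out immediately, since the constant function $\mathbf{1}$ lies in this space, has vanishing Gagliardo seminorm, and has nonzero $L^p$-norm on the bounded set $\Omega$, forcing the infimum in the variational definition to be zero.

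I would first invoke the standard density of $C^\infty(\overline{\Omega})$ in $W^{s,p}(\Omega)$, valid for any bounded Lipschitz domain and any $s\in(0,1)$ (proved via a partition of unity subordinated to boundary charts, a small translation of each local piece into the exterior of $\overline{\Omega}$, and mollification). This reduces the task to approximating a fixed $u\in C^\infty(\overline{\Omega})$ by elements of $C_c^\infty(\Omega)$ in the norm $\|\cdot\|_{s,p,\Omega}$. For each small $\varepsilon>0$ I would pick a smooth cutoff $\eta_\varepsilon$ equal to $1$ on $\{x\in\Omega:\dist(x,\partial\Omega)>2\varepsilon\}$, supported in $\{\dist(\cdot,\partial\Omega)>\varepsilon\}$, and satisfying $|\nabla\eta_\varepsilon|\leq C/\varepsilon$, and set $u_\varepsilon:=\eta_\varepsilon u\in C_c^\infty(\Omega)$. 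The $L^p$-convergence $u_\varepsilon\to u$ is immediate by dominated convergence.

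For the seminorm contribution I would split
$$(u-u_\varepsilon)(x)-(u-u_\varepsilon)(y)=(1-\eta_\varepsilon(x))\bigl(u(x)-u(y)\bigr)-u(y)\bigl(\eta_\varepsilon(x)-\eta_\varepsilon(y)\bigr).$$
The first piece contributes an integrand dominated by $|u(x)-u(y)|^p/|x-y|^{n+sp}$ with $(1-\eta_\varepsilon(x))^p\to 0$ a.e., so dominated convergence disposes of it. The delicate term is
$$\int_\Omega|u(y)|^p\int_\Omega\frac{|\eta_\varepsilon(x)-\eta_\varepsilon(y)|^p}{|x-y|^{n+sp}}\,dx\,dy,$$
and here Lipschitz regularity of $\partial\Omega$ enters through the layer estimate $|\{\dist(\cdot,\partial\Omega)<2\varepsilon\}|\leq C\varepsilon$. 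Splitting the inner integral into the region $|x-y|<\varepsilon$, where I use $|\eta_\varepsilon(x)-\eta_\varepsilon(y)|\leq C|x-y|/\varepsilon$, and the region $|x-y|\geq \varepsilon$, where I use the crude bound $|\eta_\varepsilon(x)-\eta_\varepsilon(y)|\leq 1$ together with support localisation to the boundary strip, produces an upper bound of order $\varepsilon^{1-sp}$ when $sp<1$, which vanishes as $\varepsilon\to 0$.

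The main obstacle is the critical threshold $sp=1$, where the naive estimate above merely yields $O(1)$. The remedy, carried out in the cited references \cite{dyd, AnWa, War}, is to refine the trivial bound on $|\eta_\varepsilon(x)-\eta_\varepsilon(y)|$ via a dyadic decomposition of the boundary strip that produces an extra logarithmic gain, or equivalently to argue through a borderline fractional Hardy inequality with boundary-distance weight that is known to fail only logarithmically at $sp=1$. Once the equality $W^{s,p}_0(\Omega)=W^{s,p}(\Omega)$ is established, the conclusion $P^1_{n,s,p}(\Omega)=0$ follows by the test-function argument noted above.
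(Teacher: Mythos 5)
The paper does not actually prove this lemma: it is quoted from the literature, with \cite[Theorem~3.4.3]{Tri} covering $sp<1$ for smooth domains and \cite{dyd,AnWa,War} covering Lipschitz domains up to the critical case $sp\leq 1$. Your proposal therefore supplies more argument than the paper does, and what you supply is essentially the standard proof behind those citations. The reduction of $P^1_{n,s,p}(\Omega)=0$ to the equality of spaces via the constant function is exactly the intended ``in particular''. Your cutoff argument is correct and complete in the subcritical range: the algebraic splitting of $(u-u_\varepsilon)(x)-(u-u_\varepsilon)(y)$ checks out, the first term is handled by dominated convergence since $[u]_{s,p,\Omega}<\infty$, and for the cutoff term the combination of the gradient bound $|\eta_\varepsilon(x)-\eta_\varepsilon(y)|\leq C|x-y|/\varepsilon$ on $|x-y|<\varepsilon$, the tail bound on $|x-y|\geq\varepsilon$, and the Lipschitz layer estimate $|\{\dist(\cdot,\partial\Omega)<2\varepsilon\}|\leq C\varepsilon$ indeed yields $O(\varepsilon^{1-sp})\to 0$ for $sp<1$ (the density of $C^\infty(\overline\Omega)$ in $W^{s,p}(\Omega)$ used at the start is justified for Lipschitz domains via the extension property, cf.\ \cite{hhg}).

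The one place where you stop short of a proof is the critical case $sp=1$, which is part of the statement ($s=\frac1p$). There your linear cutoff genuinely fails (the estimate is $O(1)$, as you note), and your remedy is described rather than executed: the correct device is a cutoff with logarithmic profile in $\dist(\cdot,\partial\Omega)$ (equivalently a dyadic decomposition of the boundary strip), whose Gagliardo energy gains a factor tending to zero like a negative power of $\log(1/\varepsilon)$; this is what is carried out in \cite{dyd,AnWa,War}, the very references the paper itself relies on, so your level of rigor there matches the paper's. One small caution: invoking the ``failure of the borderline fractional Hardy inequality at $sp=1$'' is not by itself equivalent to the density statement — failure of Hardy is a necessary consequence of $W^{s,p}_0(\Omega)=W^{s,p}(\Omega)$, but the implication you need runs through the explicit logarithmic cutoff construction, not through the failure of Hardy alone. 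With that case either cited (as the paper does) or filled in by the log-cutoff computation, your argument is a sound, self-contained route to the lemma.
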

	\smallskip
	
	\begin{lemma}\label{C1-subset}
		Let $p\in[1,\infty)$, and $\Omega\subset \rn$ be an open set. Then $C_c^1(\Omega)\subset W^{1,p}_0(\Omega) \subset W^{s,p}_{\Omega}(\rn)$.
	\end{lemma}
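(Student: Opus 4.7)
The statement asserts two inclusions, so I would split the proof accordingly.

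\textbf{Step 1: $C_c^1(\Omega)\subset W^{1,p}_0(\Omega)$.} Fix $\phi\in C_c^1(\Omega)$ with $K:=\mathrm{supp}\,\phi\Subset\Omega$, and let $\rho_\epsilon$ be a standard mollifier supported in $B_\epsilon$. For $\epsilon<\dist(K,\partial\Omega)$, the mollifications $\phi_\epsilon:=\phi\ast\rho_\epsilon$ lie in $C_c^\infty(\Omega)$ with supports in a common compact subset of $\Omega$. Since $\phi\in C^1_c$, both $\phi_\epsilon\to\phi$ and $\nabla\phi_\epsilon=(\nabla\phi)\ast\rho_\epsilon\to\nabla\phi$ uniformly, hence in $L^p(\Omega)$, so $\phi_\epsilon\to\phi$ in $W^{1,p}(\Omega)$. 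This shows $\phi\in W^{1,p}_0(\Omega)$.

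\textbf{Step 2: A continuity estimate.} The key step is to show that for every $\phi\in C_c^\infty(\Omega)$, extended by zero to $\rn$, one has
\[
[\phi]_{s,p,\rn}^p \;\leq\; C_1\|\nabla\phi\|_{L^p(\Omega)}^p+C_2\|\phi\|_{L^p(\Omega)}^p,
\]
with $C_1,C_2$ depending only on $n,s,p$. After the substitution $y=x+h$, I would split the domain of integration into $\{|h|\le 1\}$ and $\{|h|>1\}$. On the near-diagonal piece, write $\phi(x+h)-\phi(x)=\int_0^1 \nabla\phi(x+th)\cdot h\,dt$, apply Jensen and Fubini to get $\int_\rn|\phi(x+h)-\phi(x)|^p dx\le|h|^p\|\nabla\phi\|_{L^p}^p$, and integrate $|h|^{p-n-sp}$ over $|h|\le 1$, finite because $p(1-s)>0$. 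On the far piece, use the crude bound $\int|\phi(x+h)-\phi(x)|^p dx\le 2^{p+1}\|\phi\|_{L^p}^p$ and integrate $|h|^{-n-sp}$ over $|h|>1$, finite because $sp>0$.

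\textbf{Step 3: Extension by density.} Given $u\in W^{1,p}_0(\Omega)$, choose $\phi_k\in C_c^\infty(\Omega)$ with $\phi_k\to u$ in $W^{1,p}(\Omega)$. Applying the estimate of Step~2 to $\phi_k-\phi_j$ (zero-extended) gives
\[
[\phi_k-\phi_j]_{s,p,\rn}^p\leq C_1\|\nabla\phi_k-\nabla\phi_j\|_{L^p(\Omega)}^p+C_2\|\phi_k-\phi_j\|_{L^p(\Omega)}^p\longrightarrow 0,
\]
and $\phi_k\to u$ in $L^p(\Omega)$. Thus $(\phi_k)$ is Cauchy with respect to the norm $(\|\cdot\|_{L^p(\Omega)}^p+[\cdot]_{s,p,\rn}^p)^{1/p}$, so its limit (which must coincide with $u$ on $\Omega$ and vanish outside) lies in $W^{s,p}_\Omega(\rn)$.

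The only delicate point is Step~2: one must be sure that the near-diagonal integral converges, which is where $s<1$ enters decisively, and that the zero extension does not spoil the estimate on the $L^p$ side, which is automatic because $\phi$ vanishes outside $\Omega$. Everything else is a routine density argument.
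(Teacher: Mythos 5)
Your proof is correct, and it differs from the paper's in how it handles the first inclusion and in its level of self-containment. For $C_c^1(\Omega)\subset W^{1,p}_0(\Omega)$ the paper does not mollify: it invokes a lemma from Bal--Mohanta--Roy to place $\mathrm{supp}\,v$ inside a bounded smooth intermediate domain $\Omega_1\subset\Omega$ and then uses the trace-theorem characterization of $W^{1,p}_0(\Omega_1)$; your direct mollification argument (common compact support, uniform convergence of $\phi_\epsilon$ and $\nabla\phi_\epsilon$, hence $W^{1,p}$ convergence) is more elementary and avoids both the smooth-exhaustion lemma and the trace theorem. For the second inclusion the two arguments share the same skeleton: the paper simply cites the estimate $[\phi]_{s,p,\rn}^p\le C(n,s,p)\bigl(\|\phi\|_{L^p(\Omega)}^p+\|\nabla\phi\|_{L^p(\Omega)}^p\bigr)$ for $\phi\in C_c^\infty(\Omega)$ from Di Nezza--Palatucci--Valdinoci (Proposition 2.2) and then runs exactly your density/Cauchy argument, whereas you reprove that estimate from scratch via the near/far splitting, the fundamental theorem of calculus with Jensen on $\{|h|\le 1\}$ (using $p(1-s)>0$), and the crude $L^p$ bound on $\{|h|>1\}$ (using $sp>0$). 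What your route buys is a fully self-contained proof with only standard tools; what the paper's route buys is brevity. Two harmless remarks: the normalization constant $\tfrac{C_{n,s,p}}{2}$ in the seminorm \cref{seminorm} only changes your $C_1,C_2$, and in Step 3 the limit of the Cauchy sequence lies in $W^{s,p}_{\Omega}(\rn)$ by the very definition of that space as a closure, with the identification with (the zero extension of) $u$ forced by the $L^p(\Omega)$ convergence, exactly as you say.
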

	\begin{proof}
		To show the first inclusion, let us take an arbitrary $v\in C_c^1(\Omega)$. Then we apply \cite[Lemma~8]{BaMoRo} to get a bounded open set $\Omega_1$ with smooth boundary such that $\mbox{supp}(v)\subset\Omega_1\subset\Omega$. Clearly $v\in W^{1,p}(\Omega_1)$. Then we can say, from the well known trace theorem for Sobolev spaces, that $v\in W^{1,p}_0(\Omega_1) \subset W^{1,p}_0(\Omega)$.
		\smallskip
		
		The last inclusion follows from \cite[Proposition~2.2]{hhg}, which implies that for any $u\in C_c^\infty(\Omega)$, 
		\begin{equation*}
		\int_{\rn}\int_{\rn}\frac{|u(x)-u(y)|^p}{|x-y|^{n+sp}}dxdy \leq C(n,s,p)\left(\int_{ \Omega}|u(x)|^pdx+ \int_{\Omega} |\nabla u(x)|^pdx\right)
		\end{equation*}
		as for any $v\in W^{1,p}_0(\Omega)$, there exists a sequence of functions $v_n\in C_c^\infty (\Omega)$, converging to $v$ in $W^{1,p}_0(\Omega)$. The above inequality then suggests that the same sequence will converge to $v$ in $W^{s,p}_\Omega(\rn)$ as well.
	\end{proof}
	\smallskip
	
	\subsection*{Hyper-spherical Coordinates}\label{hyper} Before moving further, let us recall the hyper-spherical coordinates and derive an equality, which will be used in the forthcoming section. 
	
	\noindent Let
	$$
	A_{n-1}=(0,\pi)^{n-2}\times (0,2\pi)\subset\re^{n-1}.
	$$
	The hyper spherical coordinates  $H=(H_1,\cdots,H_n):A_{n-1}\to \mathbb{S}^{n-1}$ are defined as follows:\\
	for $k=1,\cdots,n$ and $\sigma=(\sigma_1,\cdots,\sigma_{n-1})$
	$$
	H_k(\sigma)=\cos\sigma_k\prod_{l=0}^{k-1}\sin\sigma_l\quad\text{ with the convention }\sigma_0=\frac{\pi}{2}\mbox{ and }\sigma_n=0.
	$$
	An elementary calculation shows
	$
	d_i(\sigma):=\left\langle \frac{\partial H}{\partial \sigma_i},\frac{\partial H}{\partial \sigma_i}\right\rangle =\prod_{l=0}^{i-1}\sin^2\sigma_l>0.
	$
	One can easily verify that the metric tensor, in these coordinates, is diagonal, that is
	$
	g_{ij}(\sigma)=\left\langle\frac{\partial H}{\partial \sigma_i},\frac{\partial H}{\partial \sigma_j}\right\rangle=\delta_{ij}d_i(\sigma),
	$
	(Here $\delta_{ij}$ denotes the usual `Kronecker delta')
	and hence the surface element $g_{n-1}$ is given by
	$$
	g_{n-1}(\sigma)=\sqrt{\det g_{ij}(\sigma)}=\sqrt{\prod_{k=1}^{n-1}d_k(\sigma)}=\prod_{k=1}^{n-1}\prod_{l=0}^{k-1}\sin\sigma_l= \prod_{k=1}^{n-2}(\sin\sigma_k)^{n-k-1}.
	$$

	\section{Proof of Main Results}\label{sec:main}
	
	\begin{lemma}
		\label{reduction formula}
		Let $0<s<1$, $1\leq p<\infty$, and for any $m, n\in\N$ with $1\leq m<n$. Let $C_{n,s,p}$ be the constant as in \cref{const flap}. Then we have the following:
		\smallskip
		
		(i) $
		C_{n,s,p}\Theta_{m,n,p}=C_{n-m,s,p},
		\quad\text{ where }\quad \Theta_{m,n,p}=\mathcal{H}^{m-1}
		\left(\sph^{m-1}\right)\int_0^\infty\frac{t^{m-1}}   
		{(1+t^2)^\frac{n+sp}{2}}\;dt
		$
		\smallskip
		
		(ii) If $a>0$ and $z\in\R^m$ then
		\begin{align*}
		\int_{\R^m}\frac{dx}{\left(1+\frac{|x-z|^2}{a^2}\right)^{\frac{n+sp}{2}}}
		=a^m \Theta_{m,n,p}.
		\end{align*}
		
	\end{lemma}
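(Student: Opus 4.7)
The plan is to prove (ii) first, then derive (i) by combining the resulting spherical reduction with the explicit formula for $C_{n,s,p}$ in \cref{const flap}. For (ii), I would perform the affine change of variables $y = (x-z)/a$, whose Jacobian is $a^m$ and which reduces the integrand to $(1+|y|^2)^{-(n+sp)/2}$. Passing to spherical coordinates $y = r\omega$ with $r\geq 0$ and $\omega\in\sph^{m-1}$ then factors the integral as $\mathcal{H}^{m-1}(\sph^{m-1})\int_0^\infty r^{m-1}(1+r^2)^{-(n+sp)/2}\,dr$, which is precisely $a^m\Theta_{m,n,p}$.

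For (i), the only real work is evaluating the radial integral in closed form. Substituting $u = t^2$ converts it into $\tfrac{1}{2}\int_0^\infty u^{m/2-1}(1+u)^{-(n+sp)/2}\,du$, a standard representation of the beta function, yielding
\begin{equation*}
\int_0^\infty \frac{t^{m-1}}{(1+t^2)^{(n+sp)/2}}\,dt \;=\; \tfrac{1}{2} B\!\left(\tfrac{m}{2},\tfrac{n-m+sp}{2}\right) \;=\; \frac{\Gamma(m/2)\,\Gamma((n-m+sp)/2)}{2\,\Gamma((n+sp)/2)}.
\end{equation*}
Combining this with $\mathcal{H}^{m-1}(\sph^{m-1}) = 2\pi^{m/2}/\Gamma(m/2)$ from \cref{eq:surface of S n-1}, the $\Gamma(m/2)$ factors cancel, leaving the clean expression $\Theta_{m,n,p} = \pi^{m/2}\Gamma((n-m+sp)/2)/\Gamma((n+sp)/2)$.

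The final step is a direct bookkeeping check: multiplying $C_{n,s,p}$ as in \cref{const flap} by this expression for $\Theta_{m,n,p}$, the factor $\Gamma((n+sp)/2)$ cancels exactly, while the $\pi$-powers combine via $\pi^{(n-1)/2}/\pi^{m/2} = \pi^{((n-m)-1)/2}$, and the remaining numerical factor $sp\cdot 2^{2s-1}/\bigl(2\,\Gamma(1-s)\Gamma((p+1)/2)\bigr)$ is untouched. The output matches $C_{n-m,s,p}$ verbatim. There is no genuine obstacle beyond selecting the beta-function substitution so the gamma factors line up with the ratio $C_{n-m,s,p}/C_{n,s,p}$; the identity is, in effect, engineered for precisely this cancellation.
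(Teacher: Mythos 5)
Your proposal is correct and follows essentially the same route as the paper: both reduce $\Theta_{m,n,p}$ to a beta function (you via $u=t^2$ and the $(0,\infty)$ beta representation, the paper via $t=\tan\theta$ and the trigonometric form in \cref{eq:properties of Beta}), arrive at $\Theta_{m,n,p}=\pi^{m/2}\Gamma\left(\frac{n-m+sp}{2}\right)/\Gamma\left(\frac{n+sp}{2}\right)$, and conclude (i) from \cref{const flap}, while (ii) is the same scaling argument $y=(x-z)/a$ made slightly more explicit by your passage to spherical coordinates.
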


	\begin{proof}
		(i) Applying the change of variable $t=\tan\theta$ in the expression of $\Theta_{m,n,p}$, followed by \cref{eq:surface of S n-1,eq:properties of Beta}, we obtain 
		\begin{multline*}
		\Theta_{m,n,p} =   
		\mathcal{H}^{m-1}\left(\mathbb{S}^{m-1}\right)
		\int_0^\frac{\pi}{2}(\sin\theta)^{m-1}(\cos\theta)^{n-m+sp-1}\;d\theta\\
		=\frac{1}{2}B\bigg(\frac{m}{2},\frac{n-m+sp}{2}\bigg) 
		\frac{2\pi^{\frac{m}{2}}}{\Gamma(\frac{m}{2})}
		=\frac{\pi^{\frac{m}{2}}\Gamma\left(\frac{n-m+sp}{2}\right)}{\Gamma\left( \frac{n+sp}{2} \right)}.
		\end{multline*}
		From \cref{const flap} we get the desired result.
		\smallskip 
		
		(ii) Taking the change of variable $y=\frac{x-z}{a}$, the identity follows immediately.
	\end{proof}
	
	\begin{proposition}\label{useful prop}
		Let $m,n\in\N$ with $m <n $, $0<s<1$, $1\leq p<\infty$ and $\Omega_{\infty}=\R^m\times\omega$, where $\omega\subset\R^{n-m}$ is a bounded open set. Then we have 
		\begin{enumerate}
			\item $P^1_{n,s,p}(\omegainfty)\leq P^1_{n-m,s,p}(\omega)$.
			\smallskip
			
			\item $	P^2_{n,s,p}(\omegainfty)\leq P^2_{n-m,s,p}(\omega).$
		\end{enumerate}
		
	\end{proposition}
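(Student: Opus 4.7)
The plan is to build test functions on $\omegainfty$ of tensor-product form by multiplying an almost-minimizer $u$ on the cross section $\omega$ by a rescaled cutoff $\phi_R$ on $\R^m$, and to show the Rayleigh quotient converges to the cross-section one as $R\to\infty$. The same construction handles both (1) and (2); the only difference is whether the $y$-integration in the Gagliardo seminorm extends over $\omega$ (case (1)) or over $\R^{n-m}$ (case (2)).

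Fix an admissible $u$ for $P^i_{n-m,s,p}(\omega)$ and $\phi\in C_c^\infty(\R^m)$ with $0\le\phi\le 1$. Set $\phi_R(x):=\phi(x/R)$ and $v_R(x,y):=\phi_R(x)u(y)$; approximating $u$ by smooth compactly supported functions and multiplying by $\phi_R$ places $v_R$ in the admissible class on $\omegainfty$. Since the two factors depend on disjoint variables, $\|v_R\|_{L^p(\omegainfty)}^p=\|\phi_R\|_{L^p(\R^m)}^p\,\|u\|_{L^p(\omega)}^p$. For the seminorm, decompose
\[
\phi_R(x_1)u(y_1)-\phi_R(x_2)u(y_2)=\phi_R(x_1)(u(y_1)-u(y_2))+(\phi_R(x_1)-\phi_R(x_2))u(y_2),
\]
and apply Minkowski's inequality in $L^p$ of the Gagliardo measure $|(x_1-x_2,y_1-y_2)|^{-(n+sp)}dx_1\,dx_2\,dy_1\,dy_2$.

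The first (main) piece, after integrating out $x_2\in\R^m$ via \cref{reduction formula}(ii) with $a=|y_1-y_2|$ and using the identity $C_{n,s,p}\Theta_{m,n,p}=C_{n-m,s,p}$ from \cref{reduction formula}(i), collapses exactly to $\|\phi_R\|_{L^p(\R^m)}^p\,[u]_{s,p,\star}^p$ (with $\star=\omega$ in case (1) and $\star=\R^{n-m}$ in case (2)). The second (error) piece, bounded above by extending the $y_1$-integration to all of $\R^{n-m}$ and applying the companion identity $C_{n,s,p}\Theta_{n-m,n,p}=C_{m,s,p}$, is at most $\|u\|_{L^p(\omega)}^p\,[\phi_R]_{s,p,\R^m}^p$. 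The substitution $x\mapsto x/R$ gives $[\phi_R]_{s,p,\R^m}^p=R^{m-sp}[\phi]_{s,p,\R^m}^p$ while $\|\phi_R\|_{L^p(\R^m)}^p=R^m\|\phi\|_{L^p(\R^m)}^p$, so the error is $O(R^{-sp})$ times the main term. Letting $R\to\infty$ and then taking infimum over $u$ produces both inequalities. The step that requires most care is the bookkeeping for the error piece: one must integrate first in the $y$-variable \emph{not} paired with $\phi_R$ (so only $|u(y_2)|^p$ appears), which is what allows the bound to factorise as $\|u\|_{L^p(\omega)}^p\,[\phi_R]_{s,p,\R^m}^p$ rather than involving a seminorm of $u$.
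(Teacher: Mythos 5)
Your proposal is correct and takes essentially the same route as the paper's proof: a tensor-product test function with a rescaled factor on $\R^m$, a Minkowski/triangle-inequality decomposition of the Gagliardo seminorm, the main term collapsing exactly via the identities of \cref{reduction formula}, and the error term bounded by $\|u\|_{L^p(\omega)}^p\,[\phi_R]_{s,p,\R^m}^p$, which is negligible relative to the main term as $R\to\infty$ by scaling. The only cosmetic differences are that the paper normalizes the $\R^m$-factor in $L^p$ (taking $v_\ell=\ell^{-m/p}v(\cdot/\ell)$) instead of dividing by the norm at the end, and pairs the cross terms slightly differently in the decomposition.
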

	
	\begin{proof}
		First, we prove (1). Note that if we can show, for any $W\in  C_c^{\infty}(\omega)$ and $\epsilon>0$, that there exists $u\in C_c^{\infty}(\Omega_{\infty})$ such that
		\begin{equation*}
		\frac{[u]_{s,p,\Omega_{\infty}}^p}{\|u\|^p_{L^p(\Omega_{\infty})}}\leq
		\frac{[W]^p_{s,p,\omega}}{\|W\|^p_{L^p(\omega)}}+\epsilon,
		\end{equation*}
		then we are done.
		
		Therefore, we start by choosing, arbitrarily, $W\in  C_c^{\infty}(\omega)$ and $v\in C_c^{\infty}(\R^m)$ which satisfies $\int_{\R^m}|v|^p=1$. We define, for $\ell>0$,
		$v_\ell(x)
		=\ell^{-\frac{m}{p}}v\left(\frac{x}{\ell}\right).
		$
		Clearly $v_\ell\in C_c^{\infty}(\R^m)$ and 
		\begin{equation}
		\label{eq:vl square integral is one}
		\int_{\R^m}|v_{\ell}|^p=1\quad\text{ for all }\ell>0.
		\end{equation}
		We denote the point $x\in\rn$ by $x=(X_1,X_2)$, where $X_1\in \R^m$ and $X_2\in \R^{n-m}$. Now we define
		$$
		u_{\ell}(X_1,X_2)=v_{\ell}(X_1)W(X_2).
		$$
		Note that we can always assume $\|W\|_{L^p(\omega)}=1$ by normalizing $W$ appropriately. Using \cref{eq:vl square integral is one} we get
		$$
		\|u_{\ell}\|_{L^p(\Omega_{\infty})}^p=
		\int_{\R^m}\int_{\omega}|v_{\ell}(X_1)|^p|W(X_2)|^pdX_2\,dX_1
		=\|W\|_{L^p(\omega)}^p=1\;\;\text{ for all }\ell>0.
		$$
		Therefore it only remains to show that, for sufficiently large $\ell$,
		\begin{equation*}
		\label{eq:norm infty leq omeganorm plus eps}
		[u_{\ell}]^p_{s,p,\Omega_{\infty}}\leq
		[W]^p_{s,p,\omega}+\epsilon(\ell),
		\end{equation*}
		where $\lim\limits_{\ell\to \infty}\epsilon(\ell)=0$; again, this will follow immediately, if we can show, after redefining $\epsilon(\ell)$ appropriately,
		\begin{equation*}
		[u_{\ell}]_{s,p,\Omega_{\infty}}\leq
		[W]_{s,p,\omega}+\epsilon(\ell).
		\end{equation*}
		
		\noindent Using the triangle inequality of $L^p(\Omega_\infty\times\Omega_\infty)$-norm, we obtain
		\begin{align}\label{main est}
		[u_\ell]_{s,p,\Omega_{\infty}}
		&=\left( \frac{C_{n,s,p}}{2}\int_{\Omega_\infty}\int_{\Omega_\infty}\frac{|u_\ell(x)-u_\ell(y)|^p}{|x-y|^{n+sp}}dxdy\right)^\frac{1}{p}\nonumber
		\\
		&=\left(\frac{C_{n,s,p}}{2}\int_{\Omega_\infty}\int_{\Omega_\infty}\frac{|v_\ell(X_1)W(X_2)-v_\ell(Y_1)W(Y_2)|^p}{|x-y|^{n+sp}}dxdy\right)^\frac{1}{p}\nonumber
		\\
		&=\left(\frac{C_{n,s,p}}{2}\int_{\Omega_\infty}\int_{\Omega_\infty}\left| \frac{v_\ell(Y_1)\left(W(X_2)-W(Y_2)\right)}{|x-y|^{\frac{n}{p}+s}}+\frac{W(X_2)\left(v_\ell(X_1)-v_\ell(Y_1)\right)}{|x-y|^{\frac{n}{p}+s}}\right|^p dxdy\right)^\frac{1}{p}\nonumber
		\\
		&\leq I_1+I_2,
		\end{align}
		where 
		$$
		I_1=\left(\frac{C_{n,s,p}}{2}\int_{\Omega_\infty}\int_{\Omega_\infty}\frac{ |v_\ell(Y_1)|^p|W(X_2)-W(Y_2)|^p}{|x-y|^{n+sp}}dxdy\right)^{1/p}
		$$
		and
		$$
		I_2=\left(\frac{C_{n,s,p}}{2}\int_{\Omega_\infty}\int_{\Omega_\infty}\frac{ |W(X_2)|^p|v_\ell(X_1)-v_\ell(Y_1)|^p}{|x-y|^{n+sp}}dxdy\right)^{1/p}.
		$$
		\smallskip
		
		We shall now estimate the integrals $I_1$ and $I_2$.
		\smallskip
		
		\noindent\textit{Estimate for $I_1$:} For $X_2\neq Y_2$ and by (ii) of \cref{reduction formula}, we get
		\begin{align*}
		\int_{\R^m}\frac{dX_1}{\left(1+\frac{|X_1-Y_1|^2}{|X_2-Y_2|^2}\right)^{\frac{n+sp}{2}}}
		=|X_2-Y_2|^m \Theta_{m,n,p}\hspace{3mm} \textrm{for any} \  Y_1\in \R^m.
		\end{align*}
		Applying this identity to the definition of $I_1$, together with (i) of \cref{reduction formula} and \cref{eq:vl square integral is one}, we get
		\begin{align*}
		I_1^p=&
		\frac{C_{n,s,p}}{2}
		\int_{\omegainfty}\int_{\omegainfty}\frac{|{v_\ell}
			(Y_1)(W(X_2)-W(Y_2))|^p}{|X_2-Y_2|^{n+sp}
			\left(1+\frac{|X_1-Y_1|^2}{|X_2-Y_2|^2}\right)^{\frac{n+sp}{2}}}dxdy 
		\smallskip
		\\
		=&\frac{C_{n,s,p}}{2} 
		\int_{\omega}\int_{\omega}\frac{|W(X_2)-W(Y_2)|^p}{|X_2-Y_2|^{n+sp}}\int_{\re^m}   
		\left(\int_{\re^m}\frac{dX_1}{\big(1+\frac{|X_1-Y_1|^2}{| 
				X_2-Y_2|^2}\big)^{\frac{n+sp}{2}}}\right)|v_\ell(Y_1)|^pdY_1dX_2dY_2 
		\smallskip
		\\
		=&
		\frac{C_{n,s,p}}{2}\Theta_{m,n,p}\int_{\omega}\int_{\omega}
		\frac{\left|W(X_2)-W(Y_2)\right|^p}{\left|X_2-Y_2\right|^{n-m+sp}}dX_2dY_2
		\int_{\R^m}|v_{\ell}(Y_1)|^pdY_1
		\smallskip
		\\
		=&
		\frac{C_{n-m,s,p}}{2}\int_{\omega}\int_{\omega}
		\frac{\left|W(X_2)-W(Y_2)\right|^p}{\left|X_2-Y_2\right|^{n-m+sp}}dX_2dY_2
		=[W]_{s,p,\omega}^p\,.
		\end{align*}
		\smallskip
		
		\noindent\textit{Estimate  for $I_2$:} We can write $I_2$ as
		\begin{align*}
		I_2^p&=\frac{C_{n,s,p}}{2}\int_{\omegainfty}\int_{\omegainfty}\frac{|({v_\ell}(X_1)-{v_\ell}(Y_1))W(X_2)|^p}{|X_1-Y_1|^{n+sp}\left(1+\frac{|X_2-Y_2|^2}{|X_1-Y_1|^2}\right)^{\frac{n+sp}{2}}}dxdy 
		\smallskip 
		\\
		&=\frac{C_{n,s,p}}{2}
		\int_{\re^m}\int_{\re^m}\frac{|{v_\ell}(X_1)-{v_\ell}(Y_1)|^p}
		{|X_1-Y_1|^{n+sp}}\int_{\omega} 
		\Bigg(\int_{\omega}\frac{dY_2}
		{\left(1+\frac{|X_2-Y_2|^2}{|X_1-Y_1|^2}\right)^{\frac{n+sp}{2}}}\Bigg)
		|W(X_2)|^p\;dX_2dX_1dY_1\,.
		\end{align*}
		Using \cref{reduction formula} (ii) we get
		\begin{align*}
		\int_{\omega}\frac{dY_2}
		{\left(1+\frac{|X_2-Y_2|^2}{|X_1-Y_1|^2}\right)^{\frac{n+sp}{2}}}
		\leq &
		\int_{\R^{n-m}}\frac{dY_2}
		{\left(1+\frac{|X_2-Y_2|^2}{|X_1-Y_1|^2}\right)^{\frac{n+sp}{2}}}
		=|X_1-Y_1|^{n-m}\Theta_{n-m,n,p}.
		\end{align*}
		Applying this to the definition of $I_2$ and using the fact that $\|W\|_{L^p(\omega)}=1$, we obtain
		$$
		I_2^p \leq \frac{C_{n,s,p}\;\Theta_{n-m,n,p}}{2}\int_{\R^m}\int_{\R^m}\frac{\left|v_{\ell}(X_1)-v_{\ell}(Y_1)\right|^p}
		{|X_1-Y_1|^{m+sp}} dX_1dY_1=[v_{\ell}]_{s,p,\R^m}^p.
		$$
		By a change of variables in the definition of $v_\ell$, we get
	
		$$
		[v_{\ell}]_{s,p,\R^m}^p=\frac{\ell^{m-sp}}{\ell^m}[v]_{s,p,\R^m}^p=
		\frac{1}{\ell^{sp}}[v]_{s,p,\R^m}^p\quad\Rightarrow\quad I_2
		\leq \frac{[v]_{s,p,\R^m}}{\ell^{s}}.
		$$
		Now plugging the above finer estimates of $I_1$ and $I_2$ into \cref{main est}, we obtain
		$$[u_\ell]_{s,p,\Omega_{\infty}}\leq [W]_{s,p,\omega}+ \frac{[v]_{s,p,\R^m}}{\ell^{s}}.$$
		This finishes the proof of (1).  Proof of (2) is similar and hence omitted. 
	\end{proof}
	\smallskip

	In the next result we shall use the concept of weak formulation (see \cref{weak formulation}).
	
	\begin{lemma}\label{useful1}
		Let $x=(X_1,X_2)\in\Omega_{\infty}$ and define $u^*(x):=W(X_2)$, where $W$ is a weak solution of \cref{cross section EV}. Then 
		\begin{multline*}
		C_{n,s,p}\int_{\R^{n-m}}\int_{\rn}\frac{|u^*(x)-u^*(y)|^{p-2} (u^*(x)-u^*(y))\psi(X_2)}{|x-y|^{n+sp}}dydX_2\\
		=P^2_{n-m,s,p}(\omega)\int_{\omega}|W(X_2)|^{p-2}W(X_2)\psi(X_2)dX_2,\text{ for all }\psi\in W^{s,p}_{\omega}(\R^{n-m}).
		\end{multline*}	
	\end{lemma}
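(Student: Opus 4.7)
The plan is to reduce the $n$-dimensional integral on the LHS to the $(n-m)$-dimensional integral that appears in the weak formulation of \cref{cross section EV} by integrating out the unbounded first $m$ coordinates using \cref{reduction formula}, and then invoking the fact that $W$ is a weak solution of \cref{cross section EV}.

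Since $u^*(x)=W(X_2)$ depends only on the last $n-m$ coordinates, writing $y=(Y_1,Y_2)$, the integrand's only dependence on $Y_1$ is through $|x-y|^2=|X_1-Y_1|^2+|X_2-Y_2|^2$. By a translation of $Y_1$, the $Y_1$-integral over $\R^m$ is independent of $X_1$; we may as well take $X_1=0$. Applying \cref{reduction formula}(ii) yields
\begin{equation*}
\int_{\R^m}\frac{dY_1}{\bigl(|Y_1|^2+|X_2-Y_2|^2\bigr)^{(n+sp)/2}}=\frac{\Theta_{m,n,p}}{|X_2-Y_2|^{n-m+sp}},
\end{equation*}
and combining with \cref{reduction formula}(i) gives $C_{n,s,p}\,\Theta_{m,n,p}=C_{n-m,s,p}$. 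Thus, after interchanging the $Y_1$ and $Y_2$ integrations by Fubini, the LHS reduces to
\begin{equation*}
C_{n-m,s,p}\int_{\R^{n-m}}\int_{\R^{n-m}}\frac{|W(X_2)-W(Y_2)|^{p-2}\bigl(W(X_2)-W(Y_2)\bigr)\psi(X_2)}{|X_2-Y_2|^{n-m+sp}}\,dY_2\,dX_2,
\end{equation*}
which equals $P^2_{n-m,s,p}(\omega)\int_\omega |W(X_2)|^{p-2}W(X_2)\psi(X_2)\,dX_2$ by the weak formulation in \cref{weak formulation} applied to $W$ with test function $\psi$.

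The main technical obstacle is justifying the use of Fubini, since the integrand is signed and not absolutely integrable near the diagonal $\{x=y\}$. As in \cref{bonder}, I would interpret the inner $y$-integration in the principal value sense: first restrict to the region $\{y:|x-y|\ge\epsilon\}$, where the integrand is bounded and, thanks to the effective compactness in $X_2$ from $\psi$ together with the decay $|x-y|^{-n-sp}$ as $|Y_1|\to\infty$, absolutely integrable; apply Fubini on this truncated region; compute the truncated $Y_1$-integral, which converges by dominated convergence to the full $\R^m$-integral above as $\epsilon\to 0$; and finally pass to the limit $\epsilon\to 0$, matching the principal value interpretation on both sides and yielding the claimed identity.
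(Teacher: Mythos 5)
Your proposal is correct and is essentially the paper's own argument run in reverse: the paper starts from the weak formulation of \cref{cross section EV} and inserts the $Y_1$-integral via \cref{reduction formula}(ii) combined with (i), whereas you integrate $Y_1$ out and then invoke \cref{weak formulation}; the computation is identical. Your extra truncation/principal-value discussion concerns an interchange the paper carries out formally (a fully rigorous treatment of the near-diagonal region ultimately uses the symmetrization device of \cref{bonder} rather than dominated convergence alone), but this does not affect the verdict.
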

	\begin{proof}
		Let $\psi\in W^{s,p}_{\omega}(\R^{n-m})$. In the following calculation, we use the fact that $W$ is a weak solution of \cref{cross section EV}; also, we use (i) of \cref{reduction formula} in the second equality, and (ii) of the same lemma, with choices $a=|X_2-Y_2|$ and $z=X_1$, in the third equality. We, then, have:
		\begin{multline*}
		P^2_{n-m,s,p}(\omega)\int_{\omega}|W(X_2)|^{p-2}W(X_2)\psi(X_2)dX_2\\
		=C_{n-m,s,p}\int_{\R^{n-m}}\int_{\R^{n-m}}\frac{ |W(X_2)-W(Y_2)|^{p-2}(W(X_2)-W(Y_2))\psi(X_2)}{|X_2-Y_2|^{n-m+sp}}dY_2dX_2\\
		={C_{n,s,p}}\int_{\R^{n-m}}\int_{\R^{n-m}}\frac{ |W(X_2)-W(Y_2)|^{p-2}(W(X_2)-W(Y_2))\psi(X_2)}{|X_2-Y_2|^{n+sp}}\\
		\int_{\R^m}\frac{dY_1}{\left(1+\frac{|X_1-Y_1|^2}{|X_2-Y_2|^2}\right)^{\frac{n+sp}{2}}}dY_2dX_2\\
		=C_{n,s,p}\int_{\R^{n-m}}\int_{\R^m}\int_{\R^{n-m}}\frac{ |W(X_2)-W(Y_2)|^{p-2}(W(X_2)-W(Y_2))\psi(X_2)}{\left(|X_1-Y_1|^2+|X_2-Y_2|^2\right)^{\frac{n+sp}{2}}}dY_2dY_1dX_2\\
		=C_{n,s,p}\int_{\R^{n-m}}\int_{\rn}\frac{|u^*(x)-u^*(y)|^{p-2} (u^*(x)-u^*(y))\psi(X_2)}{|x-y|^{n+sp}}dydX_2.
		\end{multline*}
		Since $\psi$ is arbitrary, the lemma follows.
	\end{proof}
	\smallskip

	\begin{proof}[\textbf{\protect{Proof of \cref{best constants}}}]
		Suppose $W$ is the first eigenfunction corresponding to the first eigenvalue, $P^2_{n-m,s,p}(\omega)$ of the problem \cref{cross section EV}, which is strictly positive in $\omega$ (by \cref{simplicity}). Fix any $v\in C_c^\infty (\omegainfty)$ arbitrarily. Let $\{\rho_k\}$ be the standard mollifiers in $\R^{n-m}$. Now Define $\phi(X_1,X_2):=\frac{|v|^p}{W(X_2)^{p-1}}$, and $\phi_k(X_1,X_2):=\frac{|v|^p}{W_k(X_2)^{p-1}}$, where $W_k:=W*\rho_k$. At this point, we fix any $X_1 \in \R^{m}$ so that $v(X_1,\cdot)\in C_c^\infty(\omega)$, and $\phi_k(X_1,\cdot)\in C_c^1(\omega)\subset W^{s,p}_\omega(\R^{n-m})$ (by \cref{C1-subset}). Then, as $W>0$ in $\omega$, $W_k$ are strictly positive and smooth in $\omega$. Note that there exists $\alpha>0$ such that $W,W_k>\alpha$ in $\mbox{Supp }v(X_1,\cdot)$ for any $k$. Therefore, for any $X_2\in \mbox{Supp }v(X_1,\cdot)$, 
			\begin{multline*}
			|\phi_k(X_1,X_2)-\phi_k(X_1,Y_2)|=\left|\frac{|v(X_1,X_2)|^p}{W_k(X_2)^{p-1}}-\frac{|v(X_1,Y_2)|^p}{W_k(Y_2)^{p-1}}\right|\\
			=\left|\frac{ |v(X_1,X_2)|^p-|v(X_1,Y_2)|^p}{W_k^{p-1}(X_2)}+\frac{ |v(X_1,Y_2)|^p(W_k^{p-1}(Y_2)-W_k^{p-1}(X_2))}{W_k^{p-1}(X_2)W_k^{p-1}(Y_2)}\right|\\
			\leq \alpha^{p-1}\left||v(X_1,X_2)|^p-|v(X_1,Y_2)|^p\right|+||v||_\infty^p\frac{|W_k^{p-1}(Y_2)-W_k^{p-1}(X_2)|}{W_k^{p-1}(X_2)W_k^{p-1}(Y_2)}\\
			\leq p\alpha^{p-1}\left(|v(X_1,X_2)|^{p-1}+|v(X_1,Y_2)|^{p-1}\right)|v(X_1,X_2)-v(X_1,Y_2)|\\
			+(p-1)||v||_\infty^p\frac{(W_k^{p-2}(Y_2)+W_k^{p-2}(X_2))}{W_k^{p-1}(X_2)W_k^{p-1}(Y_2)}|W_k(X_2)-W_k(Y_2)|\\
			\leq C(p,\alpha,||v||_\infty)\left(|v(X_1,X_2)-v(X_1,Y_2)|+|W_k(X_2)-W_k(Y_2)|\right).
			\end{multline*}
			This shows that 
			\begin{multline*}
			\int_{\R^{n-m}} \int_{\R^{n-m}} \frac{|\phi_k(X_1,X_2)-\phi_k(X_1,Y_2)|^p}{|X_2-Y_2|^{n-m+sp}}dX_2dY_2 \\
			\leq \int_{\R^{n-m}} \int_{\R^{n-m}} C(p,W,v) \frac{|W_k(X_2)-W_k(Y_2)|^p}{|X_2-Y_2|^{n-m+sp}}dX_2dY_2 + C(p,W,v) [v(X_1,\cdot)]^p_{s,p,\R^{n-m}}.
			\end{multline*}
			Now one can easily check that $W_k$ converges to $W$ in $W^{s,p}_\omega(\R^{n-m})$ (see \cite[Lemma~11]{FiSeVa}) and also pointwise. We can apply generalised dominated convergence theorem (see \cite[Theorem~19, Section~4.4]{RoFi}) to conclude that $\phi\in W^{s,p}_\omega(\R^{n-m})$. 
			Define $u(X_1,X_2):=W(X_2)$ and apply discrete Picone inequality \cref{picone} on $u$ and $|v|$ to obtain
			\begin{multline*}
			|u(X_1,X_2)-u(Y_1,Y_2)|^{p-2}(u(X_1,X_2)-u(Y_1,Y_2))(\phi(X_1,X_2)-\phi(Y_1,Y_2))\\
			\leq |v(X_1,X_2)-v(Y_1,Y_2)|^p.
			\end{multline*}
			This gives
			\begin{multline}\label{eq1}
			\frac{C_{n,s,p}}{2}\int_{\R^n}\int_{\R^n}\frac{|W(X_2)-W(Y_2)|^{p-2}(W(X_2)-W(Y_2))(\phi(x)-\phi(y))}{|x-y|^{n+sp}}dxdy\\
			\leq \frac{C_{n,s,p}}{2} \int_{\R^n}\int_{\R^n}\frac{|v(x)-v(y)|^p}{|x-y|^{n+sp}}dxdy.
			\end{multline}
			Now, observe the following calculation, where we have used Fubini's theorem, \cref{useful1} and positiveness of $W$:
			\begin{multline*}
			C_{n,s,p} \int_{\R^n}\int_{\R^n}\frac{|W(X_2)-W(Y_2)|^{p-2}(W(X_2)-W(Y_2))\phi(x)}{|x-y|^{n+sp}}dxdy\\
			= \int_{\R^{m}} C_{n,s,p}\int_{\R^{n-m}}\int_{\R^n}\frac{|W(X_2)-W(Y_2)|^{p-2}(W(X_2)-W(Y_2))\phi(X_1,X_2)}{|x-y|^{n+sp}}dydX_2dX_1\\
			= P^2_{n-m,s,p}(\omega)\int_{\R^{m}} \int_{\omega}|W(X_2)|^{p-2}W(X_2)\phi(X_1,X_2)dX_2dX_1\\
			= P^2_{n-m,s,p}(\omega)\int_{\Omega_{\infty}} |v(x)|^pdx <\infty.
			\end{multline*}
			This finiteness of the integrand allows us to rewrite it as: 
			\begin{multline*}
			\frac{C_{n,s,p}}{2} \int_{\R^n}\int_{\R^n}\frac{|W(X_2)-W(Y_2)|^{p-2}(W(X_2)-W(Y_2))(\phi(x)-\phi(y)}{|x-y|^{n+sp}}dxdy\\
			= C_{n,s,p} \int_{\R^n}\int_{\R^n}\frac{|W(X_2)-W(Y_2)|^{p-2}(W(X_2)-W(Y_2))\phi(x)}{|x-y|^{n+sp}}dxdy\\
			= P^2_{n-m,s,p}(\omega)\int_{\Omega_{\infty}} |v(x)|^pdx,
			\end{multline*}
			which, when combined with \cref{eq1}, gives
			$$
			P^2_{n-m,s,p}(\omega)\int_{\Omega_{\infty}} |v(x)|^pdx \leq \frac{C_{n,s,p}}{2} \int_{\R^n}\int_{\R^n}\frac{|v(x)-v(y)|^p}{|x-y|^{n+sp}}dxdy.
			$$
		As this is true for any $v\in C_c^\infty(\Omega_{\infty})$, we have $P^2_{n-m,s,p}(\omega)\leq P^2_{n,s,p}(\Omega_\infty)$, by density of $C_c^\infty(\Omega_{\infty})$ in $W^{s,p}_{\Omega_\infty}(\rn)$. The upper bound of $P^2_{n,s,p}(\omegainfty)$ follows from (2) of \cref{useful prop}. 
		\smallskip
		
		Now for the last part of the theorem, suppose that there exist a function $u$ such that $P^2_{n,s,p}(\omegainfty)=\frac{[u]_{s,p.\rn}^p}{\int_{\omegainfty}|u(x)|^pdx}$. Then $u$ is a weak solution of the problem
		\begin{equation}\label{Ev on omegainfinity}
		\begin{cases}
		(-\Delta_{n,p})^su=P_{n,s,p}^2(\Omega_{\infty})|u|^{p-2}u \text{ in }\Omega_{\infty},
		\\
		u=0 \text{ in }\R^{n}\setminus\Omega_{\infty}.
		\end{cases}
		\end{equation}
		In other words, $u$ is an eigenfunction corresponding to the first eigenvalue $P^2_{n,s,p}(\omegainfty)$. Let $h \in\R^{m}$, define $v_{h}(x)=u(X_1+h,X_2)$. By change of variable, we also have $v_h$ is an eigenfunction of \cref{Ev on omegainfinity} associated to the eigenvalue $P^2_{n,s,p}(\omegainfty)$ for any $h$.  Since $P^2_{n,s,p}(\omegainfty)$ is simple (see \cref{simplicity}), $u=\alpha_h v_h$ for some constant $\alpha_h.$ Therefore, by a change of variable, we have 
		\begin{multline*}
		\int_{\omegainfty}|u|^pdx=\int_{\R^m}\int_{ \omega}|u(X_1,X_2)|^pdX_2dX_1=|\alpha_h|^p\int_{\R^m}\int_{ \omega}|v_h(X_1,X_2)|^pdX_2dX_1
		=|\alpha_h|^p\int_{\omegainfty}|u|^pdx
		\end{multline*}
		Thus, we get $|\alpha_h|^p=1$ and this imply that $\alpha_h=1$, because $u$ has constant sign in $\omegainfty.$ Therfore, we get $u(X_1,X_2)=v_h(X_1,X_2)$ for any $h\in\R^m$. Hence $u$ is independent of $X_1$ variable. In particular,  $||u||_{L^p(\omegainfty)}$ is infinite, which gives a contradiction. This completes the proof of \cref{best constants}. 
	\end{proof}
	\smallskip
	
\begin{corollary}
		Let $\{\Omega_\ell\}$ be an increasing sequence of bounded open sets in $\rn$ that is $\Omega_{\ell}\subseteq\Omega_{\ell_1}$ for any $0<\ell<\ell_1$. If $\Omega=\bigcup_{\ell>0}\Omega_\ell$. Then we have 
		$$
		P^2_{n,s,p}(\Omega)=\inf\limits_{\ell>0}P^2_{n,s,p}(\Omega_\ell).
		$$
	\end{corollary}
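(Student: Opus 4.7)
The plan is to prove the equality via two inequalities. The $\leq$ direction is immediate from the domain monotonicity part of \cref{prop:elementary properties}: since $\Omega_\ell \subseteq \Omega$ for every $\ell>0$, we have $P^2_{n,s,p}(\Omega)\leq P^2_{n,s,p}(\Omega_\ell)$ for each $\ell$, and taking the infimum over $\ell$ yields $P^2_{n,s,p}(\Omega) \leq \inf_{\ell > 0} P^2_{n,s,p}(\Omega_\ell)$.

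For the reverse inequality, I would use that $W^{s,p}_\Omega(\rn)$ is, by definition, the closure of $C_c^\infty(\Omega)$ in the relevant norm, so it suffices to bound the Rayleigh quotient from below on smooth test functions. Fix a non-zero $u \in C_c^\infty(\Omega)$. The key observation is that $\operatorname{supp}(u)$ is a compact subset of $\Omega = \bigcup_{\ell>0}\Omega_\ell$, where $\{\Omega_\ell\}$ is an increasing open cover; hence a standard compactness argument produces some $\ell_0 > 0$ with $\operatorname{supp}(u)\subset \Omega_{\ell_0}$. Consequently $u\in C_c^\infty(\Omega_{\ell_0})\subset W^{s,p}_{\Omega_{\ell_0}}(\rn)$, and
$$
\frac{[u]_{s,p,\rn}^p}{\int_\Omega |u(x)|^p\,dx} \;=\; \frac{[u]_{s,p,\rn}^p}{\int_{\Omega_{\ell_0}} |u(x)|^p\,dx} \;\geq\; P^2_{n,s,p}(\Omega_{\ell_0}) \;\geq\; \inf_{\ell>0}P^2_{n,s,p}(\Omega_\ell).
$$
Taking the infimum over such $u$ and invoking the density of $C_c^\infty(\Omega)$ in $W^{s,p}_\Omega(\rn)$ gives $P^2_{n,s,p}(\Omega) \geq \inf_{\ell>0}P^2_{n,s,p}(\Omega_\ell)$, which together with the first inequality closes the proof.

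There is no serious obstacle here; the whole argument rests on the exhaustion fact that every compactly supported smooth function on $\Omega$ is eventually supported inside some $\Omega_{\ell_0}$, combined with domain monotonicity. A minor remark worth recording is that by monotonicity the map $\ell \mapsto P^2_{n,s,p}(\Omega_\ell)$ is non-increasing, so the infimum is in fact the limit $\lim_{\ell\to\infty} P^2_{n,s,p}(\Omega_\ell)$; this is the form in which the corollary feeds into \cref{l to infinity}.
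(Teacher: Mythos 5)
Your proof is correct and follows essentially the same route as the paper: domain monotonicity gives $P^2_{n,s,p}(\Omega)\leq\inf_{\ell>0}P^2_{n,s,p}(\Omega_\ell)$, and the reverse inequality comes from the fact that any $u\in C_c^\infty(\Omega)$ has compact support contained in some $\Omega_{\ell_0}$, so its Rayleigh quotient dominates $\inf_{\ell>0}P^2_{n,s,p}(\Omega_\ell)$, and density of $C_c^\infty(\Omega)$ in $W^{s,p}_\Omega(\rn)$ finishes the argument. Your closing remark that the infimum is in fact the limit by monotonicity is a fair observation consistent with how the result is used later.
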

	\begin{proof}
		By domain monotonicity property (2) of \cref{prop:elementary properties}, we have $\inf\limits_{\ell>0}P^2_{n,s,p}(\Omega_\ell)\geq P^2_{n,s,p}(\Omega)$. So, to establish the result, we only need to show $\inf\limits_{\ell>0}P^2_{n,s,p}(\Omega_\ell)\leq P^2_{n,s,p}(\Omega)$. Now, for any $v\in C_c^\infty(\Omega)$, there exists an $\ell>0$, big enough, such that $\mbox{supp}(v)\subset\Omega_\ell$. Then we have $\|v|_{\Omega_\ell}\|_{L^p(\Omega_\ell)}=\|v\|_{L^p(\Omega)}$ and $[v|_{\Omega_\ell}]_{s,p,\rn}=[v]_{s,p,\rn}$. So $\inf\limits_{\ell>0}P^2_{n,s,p}(\Omega_\ell)\leq P^2_{n,s,p}(\Omega_\ell)\leq\frac{[v]_{s,p,\rn}^p}{\|v\|_{L^p(\Omega)}^p}$. Since this holds for any $v\in C_c^\infty(\Omega)$, we conclude $\inf\limits_{\ell>0}P^2_{n,s,p}(\Omega_\ell)\leq P^2_{n,s,p}(\Omega)$.
	\end{proof}
	\begin{lemma}
		\label{lemma:angle condition}
		Let $\frac{1}{p}<s<1$, $\Omega\subset\R^n$ be a measurable set, and $f:\mathbb{S}^{n-1}\to [0,\infty)$ be an $\mathcal{H}^{n-1}$-measurable function satisfying 
		$$
		P^1_{1,s,p}\left(\{ t\in\R :\ x+tw \in \Omega\}\right)\geq f(w)
		$$
		for a.e. $w\in\mathbb{S}^{n-1}$ and a.e. $x\in \{y\in \R^n:\,y\cdot w=0\}.$ Then
		$$
		P^1_{n,s,p}(\Omega)\geq \frac{C_{n,s,p}}{2C_{1,s,p}}\int_{\mathbb{S}^{n-1}}f(w)d\mathcal{H}^{n-1}(w).
		$$
	\end{lemma}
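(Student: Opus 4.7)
The plan is to combine the Loss--Sloane decomposition of the Gagliardo seminorm with the pointwise assumption on $f$ via a slicing argument along each direction $w \in \sph^{n-1}$.

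I would start by taking an arbitrary $u \in C_c^\infty(\Omega)$; by density of $C_c^\infty(\Omega)$ in $W^{s,p}_0(\Omega)$, it suffices to prove the inequality for such test functions. Applying \cref{lemma:Loss Sloan} gives
$$\frac{4}{C_{n,s,p}}[u]_{s,p,\Omega}^p = \int_{\sph^{n-1}} d\mathcal{H}^{n-1}(w) \int_{\{x\cdot w=0\}} d\mathcal{H}^{n-1}(x) \int_{I_{x,w}} \int_{I_{x,w}} \frac{|u(x+\ell w) - u(x+tw)|^p}{|\ell-t|^{1+sp}}d\ell\, dt,$$
where $I_{x,w} := \{t \in \R : x+tw \in \Omega\}$.

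Next, for each direction $w \in \sph^{n-1}$ and each base point $x \in w^\perp$, the slice $g_{x,w}(t) := u(x+tw)$ is smooth and compactly supported in $I_{x,w}$. Hence the definition of $P^1_{1,s,p}(I_{x,w})$ combined with the hypothesis $P^1_{1,s,p}(I_{x,w}) \geq f(w)$ yields
$$\frac{C_{1,s,p}}{2}\int_{I_{x,w}}\int_{I_{x,w}} \frac{|u(x+\ell w)-u(x+tw)|^p}{|\ell-t|^{1+sp}}d\ell\, dt \;\geq\; f(w) \int_{I_{x,w}} |u(x+tw)|^p\, dt.$$
Integrating first over $x \in w^\perp$ and using Fubini to recognize
$$\int_{\{x\cdot w = 0\}} d\mathcal{H}^{n-1}(x)\int_{I_{x,w}} |u(x+tw)|^p\, dt = \|u\|_{L^p(\Omega)}^p,$$
then integrating in $w \in \sph^{n-1}$ and comparing with the Loss--Sloane identity above, after rearranging constants one obtains
$$[u]_{s,p,\Omega}^p \;\geq\; \frac{C_{n,s,p}}{2C_{1,s,p}} \left(\int_{\sph^{n-1}} f(w)\, d\mathcal{H}^{n-1}(w)\right) \int_\Omega |u|^p\, dx,$$
from which the asserted lower bound on $P^1_{n,s,p}(\Omega)$ follows by dividing and taking the infimum.

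The only subtlety that requires attention is ensuring that the slice-wise Poincar\'e bound is legitimate for every relevant pair $(x,w)$ and that the subsequent applications of Fubini are valid. Since the assumption holds for $\mathcal{H}^{n-1}$-a.e. $w \in \sph^{n-1}$ and a.e. $x \in w^\perp$, and the exceptional null sets do not contribute to the outer integrals, this presents no real difficulty; the hypothesis $s > 1/p$ does not enter the argument directly but is needed so that $f$ can take positive values (recall \cref{two spaces equal}).
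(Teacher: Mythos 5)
Your proposal is correct and follows essentially the same route as the paper's proof: the Loss--Sloane directional decomposition (\cref{lemma:Loss Sloan}), the slice-wise one-dimensional Poincar\'e bound from the hypothesis on $f$, Fubini to recover $\|u\|_{L^p(\Omega)}^p$, and then the infimum over test functions. You even make explicit the minor points the paper leaves implicit (density of $C_c^\infty(\Omega)$ and admissibility of the slices $t\mapsto u(x+tw)$), so nothing further is needed.
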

	\begin{proof}
		Let us choose $w\in\mathbb{S}^{n-1}$ and $x\in L_w:= \{y\in \R^n:\,y\cdot w=0\}$ arbitrarily. Denote
		$
		\Omega_{w,x}:=\{t\in\R  : \ x+t w\ \in \Omega\}.
		$
		Then from the hypotheses, we have
		\begin{align*}
		\frac{C_{1,s,p}}{2}\int_{\{\ell:\,x+\ell w\in\Omega\}}
		\int_{\{t:\,x+tw\in\Omega\}}\frac{|u(x+\ell w)-u(x+tw)|^p}{|\ell-t|^{1+sp}}dtd\ell
		\geq 
		&P^1_{1,s,p}(\Omega_{w,x})\int_{\Omega_{w,x}}|u(x+tw)|^pdt
		\smallskip
		\\
		\geq & f(w) \int_{\Omega_{w,x}}|u(x+tw)|^pdt.
		\end{align*}
		We apply Fubini's theorem to get, for any $w\in \mathbb{S}^{n-1}$,
		$$
		\int_{L_w}d\mathcal{H}^{n-1}(x)\int_{\Omega_{x,w}}|u(x+tw)|^pdt=
		\int_{\Omega}|u|^p,
		$$
		which, along with \cref {lemma:Loss Sloan}, gives
		$$
		[u]_{s,p,\Omega}^p\geq \frac{C_{n,s,p}}{2C_{1,s,p}}
		\left( \int_{\mathbb{S}^{n-1}}f(w)d\mathcal{H}^{n-1}(w)\right)\int_{\Omega}|u|^p.
		$$
		This proves the lemma.
	\end{proof}
	\smallskip
	
	Before proving \cref{Poincare 1}, observe that for any function $f$ depending only on $\sigma_1$, where $\sigma=(\sigma_1,\cdots,\sigma_{n-1})$, we have, from the discussion on \nameref{hyper} that
	\begin{align*}
	\int_{A_{n-1}}f(\sigma_1)g_{n-1}(\sigma)d\sigma=
	&\int_0^{\pi}
	f(\sigma_1)(\sin\sigma_1)^{n-2}\left(\int_{Q_{n-2}} 
	(\sin\sigma_2)^{n-3}\cdots\sin\sigma_{n-2}d\sigma_2\cdots d\sigma_{n-2}\right)
	d\sigma_1
	\smallskip
	\\
	=&\int_0^{\pi} f(\sigma_1)(\sin\sigma_1)^{n-2}\left(\int_{Q_{n-2}} 
	g_{n-2}(\theta)d\theta\right)
	d\sigma_1
	\smallskip
	\\
	=&\mathcal{H}^{n-2}(\mathbb{S}^{n-2})
	\int_0^{\pi} f(\sigma_1)(\sin\sigma_1)^{n-2}d\sigma_1.
	\end{align*}
	In particular, using \cref{eq:surface of S n-1,eq:properties of Beta}, for $f(\sigma)=|\cos\sigma_1|^{sp}$ we obtain
	\begin{equation}
	\label{eq:cos varphi 1 integral}
	\begin{split}
	\int_{A_{n-1}}|\cos\sigma_1|^{sp}g_{n-1}(\sigma)d\sigma=&
	2\mathcal{H}^{n-2}(\mathbb{S}^{n-2})
	\int_0^{\frac{\pi}{2}} (\cos\sigma_1)^{sp}(\sin\sigma_1)^{n-2}d\sigma_1
	\smallskip
	\\
	=&\frac{2\pi^{\frac{n-1}{2}}}{\Gamma\left(\frac{n-1}{2}\right)}
	B\left(\frac{n-1}{2},\frac{sp+1}{2}\right).  
	\end{split}  
	\end{equation}
	
	Now we are ready to prove \cref{Poincare 1}.
	\smallskip
	
	\begin{proof}[\textbf{Proof of \cref{Poincare 1}}]
		\textit{Part (1):}
		Assume $s\in (0, \frac{1}{p}]$. Note that the case $p=1$ is covered here, with $sp<1$. We apply \cref{useful prop} with $m=n-1$ and $\omega=(-1,1) \subset \re$ to deduce 
		$P^1_{n,s,p}(\omegainfty)\leq P^1_{1,s,p}((-1,1)).$
		Now applying \cref{two spaces equal} on $P^1_{1,s,p}((-1,1))=0$ we get the result.
		\smallskip
		
		\textit{Part (2):} Let us assume $s\in(\frac{1}{p},1)$. We know, from \cref{useful prop}, that $P_{n,s,p}^1(\Omega_{\infty})\leq P_{1,s,p}^1((-1,1))$. So, it is enough to prove that
		\begin{equation}
		\label{eq:P omegainfty geq P crosssec}
		P_{n,s,p}^1(\Omega_{\infty})\geq P_{1,s,p}^1((-1,1)).
		\end{equation}
		We shall show this using \cref{lemma:angle condition}. Choose $w=(w_1,\cdots,w_n)\in \mathbb{S}^{n-1}$ and $x\in\R^{n}$ such that $w_1\neq 0$ and $x.w=0$. Notice that $\mathcal{L}^1(\{t\in\R: \, x+tw\in\Omega_\infty\})$, i.e. the length of the intersection $\Omega_{\infty}\cap \{x+tw:\,t\in\R\}$, is independent of $x\in \omega^\perp$. So we have
		\begin{multline*}
		\mathcal{L}^1(\{t\in\R: \, x+tw\in\Omega_\infty \})
		=\mathcal{H}^1\left(\Omega_{\infty}\cap \{x+tw:\,t\in\R\}\right)\\
		=
		\mathcal{H}^1\left(\Omega_{\infty}\cap \{(-1,0,\ldots,0)+tw:\,t\in\R\}\right)
		=| t_0(w)|,
		\end{multline*}
		where $-1+t_0(w)w_1=1$ i.e. $t_0(w)=\frac{2}{w_1}$.	From (ii) of \cref{prop:elementary properties} we see that
		$$
		P_{1,s,p}^1\left(\{t\in\R: \, x+tw\in\Omega_\infty \}\right)
		=\left(\frac{|w_1|}{2}\right)^{sp}P_{1,s,p}^1((0,1))
		=|w_1|^{sp}P_{1,s,p}^1((-1,1)).
		$$
			The above equality enables us to apply \cref{lemma:angle condition}, with the choice $f(w)=P_{1,s,p}^1((-1,1)) |w_1|^{sp}$. We get
			$$
			P_{n,s,p}^1(\omegainfty)
			\geq P_{1,s,p}^1((-1,1))\frac{C_{n,s,p}}{2 C_{1,s,p}}\int_{\mathbb{S}^{n-1}}
			|w_1|^{sp}d\mathcal{H}^{n-1}.
			$$
			Again, using \nameref{hyper}, in the RHS of the above inequality, we get
			$$
			P_{n,s,p}^1(\omegainfty)
			\geq
			P_{1,s,p}^1((-1,1))\frac{C_{n,s,p}}{2 C_{1,s,p}}\int_{A_{n-1}}
			|\cos\sigma_1|^{sp}g_{n-1}(\sigma)d\sigma.
			$$
			Now, \cref{eq:cos varphi 1 integral} gives
			$$
			P_{n,s,p}^1(\omegainfty)
			\geq
			P_{1,s,p}^1((-1,1))\frac{C_{n,s,p}}{2 C_{1,s,p}}
			\frac{2\pi^{\frac{n-1}{2}}}{\Gamma\left(\frac{n-1}{2}\right)}
			B\left(\frac{n-1}{2},\frac{sp+1}{2}\right).
			$$
		
		Again, using \cref{const flap,eq:properties of Beta}, we find that
		$$
		\frac{C_{n,s,p}}{ C_{1,s,p}}
		\frac{\pi^{\frac{n-1}{2}}}{\Gamma\left(\frac{n-1}{2}\right)}
		B\left(\frac{n-1}{2},\frac{sp+1}{2}\right)=1,
		$$
		consequently $P_{n,s,p}^1(\omegainfty) \geq P_{1,s,p}^1((-1,1))$. This concludes the proof of  \cref{eq:P omegainfty geq P crosssec} and hence the theorem follows.  
	\end{proof}
	
	\begin{proof}[\textbf{Proof of \cref{l to infinity}}]
		The domain monotonicity property ((i) of \cref{prop:elementary properties}) and \cref{best constants}, implies $P^2_{n-m,s,p}(\omega)\leq P^2_{n,s,p}(\Omega_{\ell})$. For the reverse inequality, following the same proof as in (1) of \cref{useful prop}, where the domain of integration $\Omega_{\infty}$ is replaced by $\Omega_{\ell}$, we obtain
		\begin{multline*}
			P^2_{n,s,p}(\Omega_\ell)
			\leq \left(P^2_{n-m,s,p}(\omega)^\frac{1}{p}+ \frac{[v]_{s,p,\R^m}}{\ell^{s}}\right)^p\\
			\leq P^2_{n-m,s,p}(\omega)+p2^{p-1}\left( \frac{P^2_{n-m,s,p}(\omega)^\frac{p-1}{p}[v]_{s,p,\R^m}}{\ell^{s}} +\frac{[v]_{s,p,\R^m}^p}{\ell^{sp}} \right)
			= P^2_{n-m,s,p}(\omega)+\frac{C_1}{\ell^s}+\frac{C_2}{\ell^{sp}},
		\end{multline*}
	where we used the following elementary inequality: $(a+b)^q\leq a^q+q2^{q-1}(a^{q-1}b+b^q)$ for $a,b\geq 0$ and $q\geq1$. Combining these two estimates of $P^2_{n,s,p}(\Omega_{\ell})$, the first part of the theorem follows. Now letting $\ell\to\infty$ and applying \cref{best constants} we conclude the last equality. This finishes the proof of \cref{l to infinity}. 
		
	\end{proof}
	
	\smallskip
	
	\textbf{Acknowledgment:} The authors would like to thanks Prof. Prosenjit Roy, Prof. Gyula Csat\'o and Dr. Indranil Chowdhury for fruitful discussions on this subject.

	%\bibliography{fs}
	%\bibliographystyle{alpha}
	
\end{document}